\renewcommand{\baselinestretch}{1.0}
\renewcommand{\baselinestretch}{\baselinestretch}
\renewcommand{\baselinestretch}{1.2}
\newtheorem{theorem}{Theorem}[section]
\newtheorem{lemma}[theorem]{Lemma}
\newtheorem{corollary}[theorem]{Corollary}
\theoremstyle{definition}
\newtheorem{definition}[theorem]{Definition}
\newtheorem{example}[theorem]{Example}
\newtheorem{prop}[theorem]{Proposition}
\theoremstyle{remark}
\newtheorem{remark}[theorem]{Remark}
\numberwithin{equation}{section}
\begin{document}
\title[Extensions of Calabi's correspondence]{Extensions of Calabi's correspondence between minimal surfaces and maximal surfaces}
\author{Hojoo Lee}
%\address{Hojoo Lee, Dept. of Math. Sci. Seoul Nat'l Univ. Korea}
%\email{ultrametric@gmail.com}
%\thanks{The author was partially supported by NRF 2010-0014315.}
% General info
%\subjclass[2000]{Primary 49Q05, 35J47, 35B08, 53D12}
% ; Secondary 35J60, 53C50.
%\date{March 15, 2011 and, in revised form, .}
% \dedicatory{This paper is dedicated to our advisors.}
\keywords{Poincar\'{e} lemma, mean curvature, base curvature, bundle curvature}
\maketitle
% \begin{abstract}
\textbf{\textsc{Abstract.}} In 1970, Calabi introduced the duality between minimal graphs in Euclidean space ${\mathbb{R}}^{3}$ and maximal graphs in  Lorentz space ${\mathbb{L}}^{3}$. The main goal of this survey is to illustrate geometric applications of the Poincar\'{e} Lemma to constant mean curvature equations. We construct two extensions of Calabi's correspondence. This work implements Shiffman's 1956 proposal. We discover the first twin correspondence between CMC $H$ graphs in $\mathbb{E}^{3}(\kappa, \tau)$ with base curvature $\kappa$ and bundle curvature $\tau$ and spacelike CMC $\tau$ graphs in $\mathbb{L}^{3}(\kappa, H)$ with base curvature $\kappa$ and bundle curvature $H$. For instance, the twin correspondence induces the duality between CMC $H$ graphs in Riemannian Heisenberg group ${\mathrm{Nil}}^{3}(\tau)=\mathbb{E}^{3}\left(0, \tau\right)$ and spacelike CMC $\tau$ graphs in Lorentzian Heisenberg group ${\mathrm{Nil}}^{3}_{1}(H)=\mathbb{L}^{3}\left(0, H\right)$. We employ Chern's Theorem and the twin correspondence to establish that there exists no entire spacelike graph of zero mean curvature in Lorentzian Heisenberg group with non-zero bundle curvature. We discover the second twin correspondence between two dimensional minimal graphs having positive area angle functions in Euclidean space ${\mathbb{R}}^{n+2}$ and two dimensional maximal graphs having the same positive area angle functions in pseudo-Euclidean space ${\mathbb{R}}^{n+2}_{n}$ with signature $(+,+,-,\cdots,-)$. The twin correspondence induces a duality for special Lagrangian graphs in ${\mathbb{R}}^{4}$ and ${\mathbb{R}}^{4}_{2}$ with prescribed Lagrangian angles.

\bigskip

% that there exists no entire graph with non-zero constant mean curvature in ${\mathbb{R}}^{3}$
{\small \textbf{\textsc{Acknowledgement.}}} {\small I would like to deeply thank my advisor Jaigyoung
Choe for introducing me to the theory of minimal submanifolds, one of the most beautiful as well as useful branches of Mathematics.} {\small I also would like to thank Tom Wan for sending me the paper \cite{HTTW95} and Marc Soret
for introducing me the recent preprint \cite{PKW}.} {\small I completed this survey while visiting Alma Albujer at the University of C\'{o}rdoba in March, 2011 and would like to warmly thank Alma Albujer, Magdalena Caballero, and Luis Al\'{i}as for their hospitality and support.}
% I also would like to thank Tom Yau-Heng Wan for sending me the paper \cite{HTTW95} and Marc Soret
% for introducing me the preprint \cite{PKW}.
\newpage
% \end{abstract}
% We exploit the Poincar\'{e} Lemma to obtain two extensions of Calabi's correspondence.
\tableofcontents
\setcounter{section}{-1}
\section{Shiffman program}
\subsection{Introduction} \label{PL}
The Poincar\'{e} Lemma addresses the question:
\textit{when is a function equal to the derivative of another function, at least locally?} \cite{Mir95}
\begin{theorem}[Poincar\'{e}'s Lemma] \label{poincare} Let $\Omega \subset \mathbb{R}^{2}$ be
a simply connected domain. When two-variables ${\mathcal{C}}^{1}$ vector field
$\mathcal{F}(x,y)=({\mathcal{F}}_{1}, {\mathcal{F}}_{2}): \Omega \rightarrow {\mathbb{R}}^{2}$
passes the curl test $\frac{\partial {\mathcal{F}}_{1}}{\partial y} = \frac{\partial {\mathcal{F}}_{2}}{\partial x}$,
the vector field $\mathcal{F}$ admits a potential function $\phi : \Omega \rightarrow \mathbb{R}$
of class ${\mathcal{C}}^{2}$ satisfying the equality ${\mathcal{F}} = \nabla \phi$ and it becomes a gradient map.
\end{theorem}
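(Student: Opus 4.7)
The plan is to construct $\phi$ explicitly as a line integral and then verify that it is well-defined and has the required gradient. Fix a base point $p_{0}=(x_{0},y_{0})\in\Omega$, and for each $p=(x,y)\in\Omega$ choose a piecewise $\mathcal{C}^{1}$ curve $\gamma_{p}\subset\Omega$ from $p_{0}$ to $p$; since $\Omega$ is open and connected, such a curve always exists. Define
\[
\phi(x,y) \;=\; \int_{\gamma_{p}} {\mathcal{F}}_{1}\,dx+{\mathcal{F}}_{2}\,dy .
\]
The first and main step is to prove that this definition is \emph{independent of the chosen path}. Given two curves $\gamma_{p}$ and $\tilde{\gamma}_{p}$ with the same endpoints, their concatenation forms a closed loop in the simply connected domain $\Omega$; it therefore bounds (in the sense of Green's Theorem) a finite union of subregions $R_{1},\dots,R_{k}\subset\Omega$ on which I may apply
\[
\oint_{\partial R_{j}}{\mathcal{F}}_{1}\,dx+{\mathcal{F}}_{2}\,dy \;=\; \iint_{R_{j}}\!\left(\frac{\partial{\mathcal{F}}_{2}}{\partial x}-\frac{\partial{\mathcal{F}}_{1}}{\partial y}\right)dx\,dy \;=\; 0,
\]
using the curl hypothesis. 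Summing over $j$ shows the two line integrals agree, so $\phi$ is a well-defined function on $\Omega$.

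Next, I would verify the gradient identity ${\mathcal{F}}=\nabla\phi$. To compute $\partial\phi/\partial x$ at a point $(x,y)$, I exploit the path independence: choose the path that first travels from $p_{0}$ to $(x_{0},y)$ along any fixed curve and then proceeds along the horizontal segment to $(x,y)$. Then
\[
\phi(x,y) \;=\; C(y) \;+\; \int_{x_{0}}^{x}{\mathcal{F}}_{1}(t,y)\,dt,
\]
and the fundamental theorem of calculus yields $\partial\phi/\partial x={\mathcal{F}}_{1}(x,y)$. An analogous choice of path using a vertical final segment gives $\partial\phi/\partial y={\mathcal{F}}_{2}(x,y)$. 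Since ${\mathcal{F}}_{1},{\mathcal{F}}_{2}\in\mathcal{C}^{1}$, $\phi$ is of class $\mathcal{C}^{2}$.

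The main subtlety I expect to wrestle with is the topological step of converting the simple connectedness hypothesis into the statement that any closed loop in $\Omega$ bounds a region on which Green's Theorem applies, since generic simply connected open sets in $\mathbb{R}^{2}$ need not admit smooth boundaries. The cleanest workaround is to avoid Green's Theorem on the whole loop and instead argue by homotopy: shrink the loop to a constant via a continuous family of loops in $\Omega$, approximate uniformly by piecewise-linear loops inside $\Omega$ on a compact exhaustion, and apply Green's Theorem to the triangles swept out by successive rectangular grid refinements. Alternatively, since path-independence is a local-to-global issue, one may first prove the result on any disk $D\subset\Omega$ (where the integral along axis-parallel rectangles trivially vanishes by the curl hypothesis), and then invoke the standard fact that a locally constant cocycle on a simply connected space is globally constant. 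Either route completes the argument.
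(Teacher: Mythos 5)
The paper does not actually prove this statement: Theorem \ref{poincare} is quoted as a classical fact with a pointer to Miranda's book, and the text simply invokes it throughout. So there is no in-paper argument to compare against; what you have written is the standard proof, and it is essentially correct. Your two proposed routes around the topological subtlety (homotopy invariance of the line integral via piecewise-linear approximation and Green's Theorem on small triangles, or the local-to-global ``locally constant cocycle'' argument starting from path-independence on disks) are both legitimate and are exactly how textbooks handle the fact that a simply connected open set need not have a nice boundary; identifying this as the real crux is the right instinct. One small imprecision to repair: in the gradient computation you write $\phi(x,y)=C(y)+\int_{x_{0}}^{x}{\mathcal{F}}_{1}(t,y)\,dt$, which presumes the horizontal segment from $(x_{0},y)$ to $(x,y)$ lies in $\Omega$; for a general simply connected domain it need not. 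The standard fix is local: by path-independence, for small $h$ one has $\phi(x+h,y)-\phi(x,y)=\int_{x}^{x+h}{\mathcal{F}}_{1}(t,y)\,dt$ using any path to $(x,y)$ followed by a short horizontal segment inside a disk contained in $\Omega$, and the fundamental theorem of calculus then gives $\partial\phi/\partial x={\mathcal{F}}_{1}$ (similarly for $y$). With that adjustment, and noting $\nabla\phi=\mathcal{F}\in{\mathcal{C}}^{1}$ forces $\phi\in{\mathcal{C}}^{2}$, your argument is complete.
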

The main aim of this survey based on author's work \cite{Lee09b, Lee10a, Lee10b, Lee09a, Lee11a} is to illustrate geometric applications of Poincar\'{e}'s Lemma to constant mean curvature equations. It implements Shiffman's 1956 proposal (section \ref{SP}).
\subsection{Cauchy--Riemann equations} \label{LCR}
We begin with two-variables harmonic functions. Applying Poincar\'{e}'s Lemma to
$\left(\;f_{x}\;\right)_{x} + \left(\;f_{y}\;\right)_{y} =0$, the Laplace equation,
solves the Cauchy--Riemann equations:
\[
f_{x} = g_{y} \quad \text{and} \quad f_{y}=-g_{x}.
\]
The potential function $g$ (up to adding a constant) is called \textit{the conjugate function} of $f$.
The Cauchy--Riemann equations implies that $g$ is also harmonic:
$\left(\;g_{x}\;\right)_{x} + \left(\;g_{y}\;\right)_{y}
= \left(\;-f_{y}\;\right)_{x} + \left(\;f_{x}\;\right)_{y}
= 0$. In this sense, Laplace's equation is self-dual.
Also, the \textit{combined} complex-valued function $f+ig$ becomes a holomorphic function.
The Cauchy--Riemann equations admit various several-variables generalizations \cite{Pal65, RK50, SW60, SW68} including the Hodge equations.
% This aspect, pairing harmonic functions with their conjugate functions, reflects the main philosophy of our
% applications of Poincar\'{e}'s Lemma.
\subsection{Minimal surfaces and maximal surfaces} \label{MSE}
\subsubsection{One hundred years of Bernstein problems}
Bernstein proved a truly beautiful theorem that the only entire minimal graphs in Euclidean space ${\mathbb{R}}^{3}$
are planes. Moser \cite{Mos61} showed that for all dimensions $n \geq 3$ the entire
solutions of minimal hypersurface equation in ${\mathbb{R}}^{n}$
with bounded gradients have to be affine. Ecker and Huisken \cite{EH90} deduced a curvature estimate theorem to improve Moser's Theorem. Bombieri, De Giorgi and Giusti \cite{BGG69} disproved Bernstein's conjecture for entire minimal hypersurfaces in higher dimensions. Pino, Kowalczyk, and Wei \cite{PKW} used the existence of non-trivial entire minimal graphs in higher dimensions to construct counterexamples of De Giorgi conjecture for the Allen--Cahn equation. For more updates, we refer to \cite{NS05, Pac10, Sim97}.
\subsubsection{Forty years of Calabi problems} A maximal surface in Lorentz space ${\mathbb{L}}^{3}$
with signature $(+,+,-)$ is a spacelike surface with zero mean curvature. Calabi's Theorem that the only entire maximal graphs in ${\mathbb{L}}^{3}$ are spacelike planes and its parametric version that the only complete maximal surfaces in $\mathbb{L}^{3}$ are spacelike planes admit various approaches \cite{AP01b, Cal70, CY76, ER92, ER94, Kob83, MAR10, RR10, Ro96}.
The global theory of maximal surfaces with \textit{singularities} is rich \cite{FL07, FL10, FLS05, FLS07, FRUYY09,
FSUY08, MY05}. Fern\'{a}ndez, L\'{o}pez, and Souam \cite{FLS05} proved that complete embedded maximal surfaces in ${\mathbb{L}}^{3}$ with a finite number of singularities are entire maximal graphs with conelike singularities and
asymptotic to half catenoids or spacelike planes. Conelike singularity corresponds to the point where the Gauss curvature
blows up \cite{Eck86, FLS05, KM03, Kob84}. Cheng and Yau \cite{CY76} extended Calabi's Theorem for all dimensions $n \geq 3$.
%In 1976, Cheng and Yau \cite{CY76} proved that Calabi's Theorem in ${\mathbb{L}}^{n+1}$
%holds for all dimensions $n \geq 2$.
\subsubsection{Shiffman proposal and Calabi correspondence} \label{SP} In 1970, Calabi introduced an interesting
duality between minimal graphs in ${\mathbb{R}}^{3}$ and maximal graphs in ${\mathbb{L}}^{3}$. Calabi's
correspondence can be viewed as a geometric generalization of the Cauchy--Riemann equations. The minimal surface equation in ${\mathbb{R}}^{3}$ reads
\[
0 = \frac{\partial}{\partial x} \left( \frac{f_{x}}{\sqrt{1+{f_{x}}^{2}+{f_{y}}^{2}}} \right)
+ \frac{\partial}{\partial y} \left( \frac{f_{y}}{\sqrt{1+{f_{x}}^{2}+{f_{y}}^{2}}} \right).
\]
We apply Poincar\'{e}'s Lemma to the minimal surface equation to obtain the potential function $g$ satisfying the integrability
condition
\[
\left( - \frac{f_{y}}{\sqrt{1+{f_{x}}^{2}+{f_{y}}^{2}}}, \frac{f_{x}}{\sqrt{1+{f_{x}}^{2}+{f_{y}}^{2}}} \right) = \left( {g}_{x}, {g}_{y} \right).
\]
The existence of such potential function $g$ is a classical result, for instance, see Nitsche's book \cite{Nit88}.
In 1956, Shiffman \cite{Shi56} indicated that the function $g$ obeys
\[
0 = \frac{\partial}{\partial x} \left( \frac{g_{x}}{\sqrt{1-{g_{x}}^{2}-{g_{y}}^{2}}}
\right) + \frac{\partial}{\partial y} \left( \frac{g_{y}}{\sqrt{1-{g_{x}}^{2}-{g_{y}}^{2}}}
\right), \quad 1-{g_{x}}^{2}-{g_{y}}^{2} >0.
\]
He proposed that "\textit{In addition, it is possible to study systems of partial differential equations analogous to
the Cauchy--Riemann equations}". In 1966, Jenkins and Serrin \cite{JS66} called the potential function $g$ \textit{the conjugate function} of $f$. In 1970, Calabi \cite{Cal70} stated that the above equation is the \textit{maximal surface equation} in   ${\mathbb{L}}^{3}$. Conversely, applying Poincar\'{e}'s Lemma to the above maximal surface equation in ${\mathbb{L}}^{3}$, we can recover the function $f$ such that
\[
\left( \frac{g_{y}}{\sqrt{1-{g_{x}}^{2}-{g_{y}}^{2}}}, - \frac{g_{x}}{\sqrt{1-{g_{x}}^{2}-{g_{y}}^{2}}} \right) = \left( {f}_{x}, {f}_{y} \right).
\]
This integrability condition implies that the function $f$ satisfies the minimal surface equation in ${\mathbb{R}}^{3}$.
The existence of Calabi's correspondence indicates that the maximal surface equation naturally appears in the study of
the  minimal surface equation, and vice-versa \cite{JS66, Maz07, Maz10, MRT07, MT08}.
It turns out that Shiffman's program is indeed useful. For instance, to construct a properly embedded minimal surface in the
flat product space ${\mathbb{R}}^{2} \times {\mathbb{S}}^{1}$ which is quasi-periodic (but not periodic), Mazet
and Traizet \cite{MT08} solved a Dirichlet boundary value problem on an infinite strip for the maximal surface
equation with prescribed singularities along a line of the strip to solve
to a Jenkins-Serrin type problem for the minimal surface equation.
\begin{example} [Helicoids and catenoids]
Applying Calabi's correspondence to the helicoid $z=\arctan \left( \frac{y}{x} \right)$ as a minimal surface in
$\mathbb{R}^{3}$ yields the Lorentz catenoid $z={\sinh}^{-1} \left( \sqrt{x^{2}+y^{2}} \right)$ as a
maximal surface in $\mathbb{L}^{3}$. On the other hand, Calabi's correspondence transforms the helicoid $z=\arctan \left( \frac{y}{x} \right)$ as a maximal surface in $\mathbb{L}^{3}$ to the catenoid $z={\cosh}^{-1} \left( \sqrt{x^{2}+y^{2}} \right)$ in $\mathbb{R}^{3}$. The Euclidean catenoid $z=\sqrt{{\cosh}^{2}y - x^{2}}$ in $\mathbb{R}^{3}$ corresponds to the helicoid of the second kind in $\mathbb{L}^{3}$ given by $z=x\tanh y$.
(Here, we omitted the description of the domain of height functions of minimal graphs and maximal graphs.)
\end{example}
\begin{example} [Scherk's surfaces]
Under Calabi's correspondence, the doubly periodic Scherk's minimal surface $z=\ln \left( \frac{\cos y}{\cos x} \right)$
transforms to the triply periodic maximal surface $z=\arcsin \left( \sin x \sin y \right)$. The doubly periodic Scherk's minimal surface $\cos z= e^x \cos y$ also corresponds to the singly periodic maximal surface $\sinh z=e^{x} \cos y$. The singly periodic Scherk's minimal surface $ z= \arcsin \left( \sinh x \sinh y \right) $ corresponds to the maximal surface $ z=\ln \left( \frac{\cosh y}{\cosh x} \right)$. (Here, we again omitted the description of the domain of height functions.)
\end{example}
\subsubsection{L\'{o}pez--L\'{o}pez--Souam correspondence}
In 2000, L\'{o}pez, L\'{o}pez, and Souam \cite{LLS00} observed that \textit{a maximal immersion of a Riemann surface
in $\mathbb{L}^{3}$ induced by a triple of holomorphic null one forms $({\phi}_{1}, {\phi}_{2}, -i{\phi}_{3})$
determines a minimal immersion of the Riemann surface in $\mathbb{R}^{3}$ induced by the triple of one
forms $({\phi}_{1},{\phi}_{2}, {\phi}_{3})$.} Ara\'{u}jo and Leite \cite{AL09} revealed the structure of the L\'{o}pez--L\'{o}pez--Souam correspondence. Under this duality, a doubly-periodic Scherk's minimal surface in $\mathbb{R}^{3}$ determines a two-parameter family of non-congruent maximal surfaces in ${\mathbb{L}}^{3}$.
\subsubsection{Two correspondences are the same.} \label{thesame} It is then natural to ask whether or not two dualities between minimal surfaces in ${\mathbb{R}}^{3}$ and maximal surfaces in ${\mathbb{L}}^{3}$ are essentially identical. In \cite{Lee10a}, we proved that the Calabi correspondence becomes the L\'{o}pez--L\'{o}pez--Souam correspondence.
% We extend this in Theorem \ref{SCC}.
% Theorem \ref{twintwo2} and
%The main idea is to prove the existence of simultaneous conformal coordinates for minimal graphs in $\mathbb{R}^{3}$ and
%their twin maximal graphs in $\mathbb{L}^{3}$.
\subsection{Albujer--Al\'{i}as correspondence} \label{ZMC}
In 2008, Albujer and Al\'{i}as \cite{AA08} proved that Calabi's duality admit natural extensions to
zero mean curvature surfaces in product spaces. Let ${\mathcal{M}}_{\kappa}$ endowed with metric ${d\sigma}^{2}$ denote two dimensional space form with curvature $\kappa$. They constructed the duality between graphs with zero mean curvature
in Riemannian product space ${\mathcal{M}}_{\kappa} \times \mathbb{R}$ endowed with the metric
${d\sigma}^{2}+dz^{2}$ and spacelike graphs with zero mean curvature
in Lorentzian product space ${\mathcal{M}}_{\kappa} \times \mathbb{R}_{1}$
equipped with the metric ${d\sigma}^{2}-dz^{2}$.
% \subsection{Twin correspondence I} \label{SecTwin1}
% \newpage
\newpage
\section{Twin correspondence I} \label{SecTwin1}
\subsection{Introduction} % Riemannian and Lorentzian
We generalize Calabi's correspondence to constant mean curvature graphs in Riemannian and Lorentzian
Bianchi--Cartan--Vranceanu spaces. When the target equation is the minimal surface equation,
which is zero divergence form, we are able to directly
apply Poincar\'{e}'s Lemma to the minimal surface equation to associate its potential function.
The main point of our generalized Calabi correspondence is the fact that the argument using
Poincar\'{e}'s Lemma can be extended to non-zero constant mean curvature equation, which is evidently
non-zero divergence form.
The key idea is to exploit our base curvature lemma (Lemma \ref{bcl}) to transform non-zero
constant mean curvature equation into zero divergence equation. Then applying Poincar\'{e}'s Lemma
to constant mean curvature equation yields:
\begin{theorem}[Twin correspondence I \cite{Lee10a}] \label{DT}
We have the twin correspondence (up to vertical translations) between graphs of constant mean curvature $H$ in
Riemannian Bianchi--Cartan--Vranceanu space $\mathbb{E}^{3}(\kappa, \tau)$ and spacelike graphs of constant mean curvature $\tau$ in Lorentzian Bianchi--Cartan--Vranceanu space $\mathbb{L}^{3}(\kappa, H)$.
\end{theorem}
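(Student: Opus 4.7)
The plan is to mimic the classical Calabi argument while accommodating the extra geometry coming from the base curvature $\kappa$ and the bundle curvature $\tau$. First, I would write down explicitly the CMC $H$ graph equation for $z=f(x,y)$ in $\mathbb{E}^{3}(\kappa,\tau)$ using the Bianchi--Cartan--Vranceanu metric. Unlike in the Euclidean case, the resulting equation is not in divergence form: there are extra zero-order terms coming from the non-flat base (the conformal factor determined by $\kappa$) and cross terms coming from the twisting bundle curvature $\tau$. The point of the base curvature lemma (Lemma \ref{bcl}) is exactly to absorb the base-curvature obstruction, rewriting the CMC $H$ equation in the purely divergence form $\partial_{x} P + \partial_{y} Q=0$, where $(P,Q)$ now depends on $(x,y,f,f_{x},f_{y})$ and encodes both $H$ and $\tau$.

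Once the equation is in divergence form, on a simply connected domain Poincar\'e's Lemma (Theorem \ref{poincare}) applied to the $\mathcal{C}^{1}$ vector field $(-Q, P)$ produces a potential function $g$ (unique up to an additive constant, which yields the ``up to vertical translations'' clause) satisfying $g_{x}=-Q$ and $g_{y}=P$. The next step is to algebraically invert these two identities to recover $(f_{x},f_{y})$ as explicit functions of $(g_{x},g_{y})$, and to verify that the inversion swaps the roles of $H$ and $\tau$ while preserving $\kappa$. After substitution, the integrability condition that the original $f$ is a genuine graph translates into $g$ being a \emph{spacelike} graph, i.e.\ into the positivity of the appropriate weighted angle function in $\mathbb{L}^{3}(\kappa,H)$; this is automatic because $|P|^{2}+|Q|^{2}$ is bounded by $1$ after normalization by the angle function of the original graph.

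With these identifications in hand, the final step is to recognize that the equation $\partial_{x}g_{x}$ coming from the inverted relations, read with the Lorentzian signature and the new bundle curvature $H$, is exactly the divergence form of the spacelike CMC $\tau$ equation in $\mathbb{L}^{3}(\kappa,H)$ after re-applying the base curvature lemma (now on the Lorentzian side, with the roles of $H$ and $\tau$ exchanged). The reverse direction is formally identical: start from a spacelike CMC $\tau$ graph in $\mathbb{L}^{3}(\kappa,H)$, put its equation in divergence form using the base curvature lemma, invoke Poincar\'e to produce the conjugate function, and recover a CMC $H$ graph in $\mathbb{E}^{3}(\kappa,\tau)$. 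The symmetry of the construction guarantees that the two maps are mutually inverse modulo vertical constants.

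The main obstacle I anticipate is bookkeeping rather than conceptual difficulty: one must check that the extra $\tau$-induced terms on the Riemannian side correspond exactly to the $H$-induced terms on the Lorentzian side, and that the $\kappa$-dependent potentials supplied by the base curvature lemma match on both sides. The computation is sensitive to sign conventions, since passing from $\mathbb{E}^{3}$ to $\mathbb{L}^{3}$ flips the sign of $dz^{2}$, and the interchange of $H$ and $\tau$ must be carried out consistently with that flip and with the orientation chosen for the unit normal. Once the correct normalization of $(P,Q)$ is fixed, the algebraic inversion and the identification of the twin equation should be essentially a direct verification.
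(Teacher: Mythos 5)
Your proposal follows essentially the same route as the paper's proof of Theorem \ref{DT2}: use the base curvature lemma to rewrite the CMC $H$ equation in divergence-zero form, apply Poincar\'{e}'s Lemma on the simply connected domain to obtain the conjugate function $g$, invert the twin relation, and verify that $g$ solves the spacelike CMC $\tau$ equation (with the spacelike condition automatic since $\tilde{\omega}=1/\omega$). The one point worth making explicit in your "bookkeeping" step is that the base curvature lemma enters a \emph{second} time to compute the curl $\partial_{y}\alpha-\partial_{x}\beta=2\tau/\delta^{2}$ of the twisted gradient $(\alpha,\beta)=(f_{x}+\tau y/\delta,\,f_{y}-\tau x/\delta)$, which is precisely how the bundle curvature $\tau$ re-emerges as the mean curvature of the twin graph.
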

Our twin correspondence in Theorem \ref{DT} shows us a mysterious reciprocity of two looks-unrelated curvatures: mean
curvature of \textit{surfaces} and bundle curvature of \textit{spaces}. The key idea of the
proof of Theorem \ref{DT} is to exploit a geometric observation on base curvature of \textit{spaces}.
% In the case when all three curvatures $H$, $\kappa$, and $\tau$ vanish, twin correspondence in Theorem \ref{DT}
% becomes Calabi's correspondence \cite{Cal70}.
\begin{example}
When both the bundle curvature and the mean curvature vanish, the twin correspondence
reduces to the Albujer--Al\'{i}as duality \cite{AA08} between minimal graphs in Riemannian product space
${\mathcal{M}}_{\kappa} \times \mathbb{R}=\mathbb{E}^{3}(\kappa, 0)$ and maximal graphs in Lorentzian product space ${\mathcal{M}}_{\kappa} \times \mathbb{R}_{1}=\mathbb{L}^{3}(\kappa, 0)$. When the base curvature vanishes,
the twin correspondence induces the duality between graphs of constant mean curvature $H \neq 0$ in Riemannian Heisenberg group ${\mathrm{Nil}}^{3}(\tau)=\mathbb{E}^{3}\left(0, \tau\right)$ and spacelike graphs of constant mean curvature $\tau \neq 0$ in Lorentzian Heisenberg group ${\mathrm{Nil}}^{3}_{1}(H)=\mathbb{L}^{3}\left(0, H\right)$.
\end{example}
% specially after the works of Abresch and Rosenberg \cite{AR05}.
% in Bianchi--Cartan--Vranceanu space and Thurston's 8 three dimensional geometries.
\begin{remark}
In recent years there is a high activity on the study of surfaces with constant mean curvature in Bianchi--Cartan--Vranceanu space $\mathbb{E}^{3}(\kappa, \tau)$. In 2009, Daniel, Hauswirth, and Mira wrote a neat lecture note \cite{DHM09} containing recent breakthroughs. We also recommend the survey \cite{FM10} presented by Fern\'{a}ndez and Mira at ICM 2010.
\end{remark}
\subsection{Bianchi--Cartan--Vranceanu spaces}
Our ambient spaces are Riemannian and Lorentzian Bianchi--Cartan--Vranceanu spaces.
The Riemannian Bianchi--Cartan--Vranceanu space $\mathbb{E}^{3}(\kappa, \tau)$ with base curvature $\kappa$ and bundle curvature $\tau$ is
\[
\mathbb{E}^{3}(\kappa, \tau) =\left(\mathbf{V},
\frac{dx^2 +dy^2 }{ \left[ 1+ \frac{\kappa}{4} \left( x^2 + y^2 \right) \right]^{2} } +
\left[ \tau \left( \frac{y dx - x dy}{ 1+ \frac{\kappa}{4} \left( x^2 + y^2 \right) } \right) + dz \right]^2 \right),
\]
where $\mathbf{V}=\{ (x,y,z) \in {\mathbb{R}}^{3} \; \vert \; {\delta}_{\kappa}(x,y)>0 \}$
and ${\delta}_{\kappa}(x,y)= 1+ \frac{\kappa}{4} \left( x^2 + y^2 \right)$.
Up to homothetical normalizations, the Riemannian Bianchi--Cartan--Vranceanu spaces isometric to the following homogeneous manifolds \cite{Dan07, DHM09, FM10}:
\begin{center}
\begin{tabular}{ c | c c c }
{$\mathbf{{\mathbb{E}}^{3}(\kappa, \tau)}$} & $\kappa<0$ & $\kappa=0$ & $\kappa>0$ \\ \hline
$\tau =0$ & {${\mathbb{H}^{2}} \times \mathbb{R}$} & { $ { \mathbb{E}}^{3} $} & {${\mathbb{S}}^{2} \times \mathbb{R} $ } \\
$\tau \neq 0$ & { $ \widetilde{ {PSL}_{2}[ \mathbb{R}]}$} & { ${ \mathrm{Nil} }^{3}$} & { \textbf{${ \hat{\mathbb{S}} }^{3}_{{}_{ { \tiny{\textbf{BERGER}} }}}$} } \\
\end{tabular}
\end{center}
The notion of graphs over ${\mathcal{M}}_{\kappa}=\mathbb{E}^{3}(\kappa, \tau) \cap \{z=0\}$ is natural. Indeed our ambient space $\mathbb{E}^{3}(\kappa, \tau)$ admits a Riemannian fibration over the base space form
\[
{\mathcal{M}}_{\kappa}=\left( \{ (x,y) \in {\mathbb{R}}^{2} \; \vert \; {\delta}_{\kappa}(x,y)>0 \},
\frac{1}{{{\delta}_{\kappa} }^{2}} \left( dx^2 + dy^2 \right) \right)
\]
via the standard projection $(x,y,z) \mapsto (x,y)$. We also introduce the Lorentzian Bianchi--Cartan--Vranceanu space
\[
\mathbb{L}^{3}(\kappa, \tau) =\left( \mathbf{V}, \;
\frac{dx^2 +dy^2 }{ \left[ 1+ \frac{\kappa}{4} \left( x^2 + y^2 \right) \right]^{2} } -
\left[ \tau \left( \frac{y dx - x dy}{ 1+ \frac{\kappa}{4} \left( x^2 + y^2 \right) } \right) + dz \right]^2 \right),
\]
where $\mathbf{V}=\{ (x,y,z) \in {\mathbb{R}}^{3} \; \vert \; {\delta}_{\kappa}(x,y)>0 \}$
and ${\delta}_{\kappa}(x,y)= 1+ \frac{\kappa}{4} \left( x^2 + y^2 \right)$.
\subsection{Base curvature lemma} \label{sectionBCL}
The key idea to prove the twin correspondence in Bianchi--Cartan--Vranceanu spaces
is to exploit the following lemma.
\begin{lemma}[Base curvature lemma \cite{Lee10a}] \label{bcl}
Let $\kappa \in {\mathbb{R}}$ be a constant. The quadratic function $\delta={\delta}_{\kappa}(x,y)=1 + \frac{\kappa}{4} \left( x^2 + y^2 \right)$ satisfies the algebraic identity
\[
\frac{2}{{\delta}^{2}} = \frac{\partial}{\partial x} \left( \frac{x}{\delta} \right) + \frac{\partial}{\partial y} \left( \frac{y}{\delta} \right).
\]
\end{lemma}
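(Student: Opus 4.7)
The plan is to verify the identity by a direct computation with the quotient rule, and then collapse the result using the defining relation $\frac{\kappa}{4}(x^{2}+y^{2}) = \delta - 1$. No clever idea is required; the content of the lemma is that a convenient algebraic cancellation occurs.

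First I would record the partial derivatives of $\delta$, namely $\delta_{x} = \frac{\kappa x}{2}$ and $\delta_{y} = \frac{\kappa y}{2}$. Next I would apply the quotient rule to each summand on the right-hand side:
\[
\frac{\partial}{\partial x}\!\left(\frac{x}{\delta}\right) = \frac{\delta - x\,\delta_{x}}{\delta^{2}} = \frac{\delta - \frac{\kappa}{2}x^{2}}{\delta^{2}}, \qquad
\frac{\partial}{\partial y}\!\left(\frac{y}{\delta}\right) = \frac{\delta - y\,\delta_{y}}{\delta^{2}} = \frac{\delta - \frac{\kappa}{2}y^{2}}{\delta^{2}}.
\]
Adding these gives $\dfrac{2\delta - \frac{\kappa}{2}(x^{2}+y^{2})}{\delta^{2}}$.

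Finally I would substitute $\frac{\kappa}{2}(x^{2}+y^{2}) = 2(\delta-1)$, which is exactly twice the defining identity for $\delta$. The numerator then collapses to $2\delta - 2(\delta-1) = 2$, yielding $\frac{2}{\delta^{2}}$ as claimed.

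There is no real obstacle here: the only step that could trip one up is keeping the factor of $\kappa/4$ versus $\kappa/2$ straight, which is why I would rewrite $\frac{\kappa}{4}(x^{2}+y^{2})$ as $\delta-1$ at the last moment rather than expand everything in terms of $x$ and $y$. This identity is purely algebraic; its geometric significance, exploited in the proof of Theorem \ref{DT}, is that it expresses the area form $\delta^{-2}\,dx\wedge dy$ of the base space form $\mathcal{M}_{\kappa}$ as an exact divergence, precisely the structural fact needed to convert a non-zero constant mean curvature equation into divergence form so that Poincar\'{e}'s Lemma applies.
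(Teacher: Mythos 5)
Your computation is correct and is exactly the direct verification the paper has in mind --- the paper's own proof simply reads ``It is trivial,'' and your quotient-rule calculation followed by the substitution $\frac{\kappa}{2}(x^{2}+y^{2})=2(\delta-1)$ supplies the omitted details without any change of approach. Your closing remark on the geometric role of the identity also matches the paper's subsequent remark deriving it from the Gauss formula $\kappa=\delta^{2}\triangle(\ln\delta)$.
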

\begin{proof}
It is trivial.
\end{proof}
\begin{remark}
In the proof of Theorem \ref{DT}, we shall employ the base curvature lemma
to transform the non-zero constant mean curvature equations in Bianchi--Cartan--Vranceanu spaces to zero divergence type equations.
\end{remark}
\begin{remark}
When $\kappa \neq 0$, the identity in Lemma \ref{bcl} admits a \textit{geometric} motivation.
Let's consider the conformal metric in the $xy$-plane
\[
ds^{2} = \frac{1}{{\delta}^{2}} \left( dx^{2}+dy^{2} \right),
\]
where $\delta={\delta}_{\kappa}(x,y)=1 + \frac{\kappa}{4} \left( x^2 + y^2 \right)$. Its induced
curvature is $\kappa$. Hence the Gauss formula
\[
\kappa = {\delta}^{2} \triangle \left( \ln \delta \right),
\quad \triangle:=\frac{\partial^{2}}{\partial x ^{2}}+\frac{\partial^{2}}{\partial y ^{2}},
\]
can be re-written as in the form
\[
\kappa = {\delta}^{2} \left[ \frac{\partial}{\partial x} \left( \frac{{\delta}_{x}}{\delta} \right)
+ \frac{\partial}{\partial y} \left( \frac{{\delta}_{y}}{\delta} \right) \right]
= \frac{ \kappa {\delta}^{2}}{2} \left[ \frac{\partial}{\partial x} \left( \frac{x}{\delta} \right)
+ \frac{\partial}{\partial y} \left( \frac{y}{\delta} \right) \right].
\]
Canceling out $\kappa\neq0$ in both terms, we meet the desired identity. This argument explains that why Lemma \ref{bcl}
is called the base curvature lemma.
\end{remark}
\subsection{Proof of twin correspondence I}
We first state the constant mean curvature equations in Bianchi--Cartan--Vranceanu spaces.
\begin{prop}
Let the quadruple $\left(\kappa, \tau, H, \epsilon \right) \in \mathbb{R}^{3} \times \{1, -1 \}$ be given. We introduce the partial differential equation $\mathrm{CMC}_{\left(\kappa, \tau, H, \epsilon \right)} [ f(x,y)]$:
\[
\frac{2H}{{\delta}^{2}} = \frac{\partial}{\partial x} \left( \frac{\alpha}{\omega} \right) + \frac{\partial}{\partial y} \left( \frac{\beta}{\omega} \right),
\]
where $\left( \alpha, \beta \right) =\left( f_{x} + \tau \frac{y}{\delta}, f_{y} - \tau \frac{x}{\delta} \right)$ and
$\omega= \sqrt{ 1+ \epsilon {\delta}^{2} \left( {\alpha}^2 + {\beta}^2 \right)}$. Here, for the case when $\epsilon=-1$, we need to assume the \textit{spacelike} condition:
\[
1+\epsilon{\delta}^{2}\left( {\alpha}^2 + {\beta}^2 \right)>0.
\]
\textbf{(a)} The graph $z=f(x,y)$ in $\mathbb{E}^{3}(\kappa, \tau)$ has constant mean curvature $H$ only when
the height function $f$ satisfies the differential equation $\mathrm{CMC}_{\left(\kappa, \tau, H, 1 \right)}$. \\
\textbf{(b)} We say that a surface in $\mathbb{L}^{3}(\kappa, \tau)$ is spacelike if its induced pullback metric is positive definite or equivalently if it admits timelike pointing normal.
The spacelike graph $z=f(x,y)$ in $\mathbb{L}^{3}(\kappa, H)$ has constant mean curvature $\tau$ if and only if
the height function $f$ satisfies the differential equation $\mathrm{CMC}_{\left(\kappa, -H, \tau, -1 \right)}$.
\end{prop}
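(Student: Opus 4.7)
The plan is to derive both PDE families simultaneously from a first-variation-of-area argument. Writing the ambient metric in either case as
\[
g \;=\; \frac{dx^2+dy^2}{\delta^2} + \epsilon\,\eta\otimes\eta, \qquad \eta \;=\; \tilde\tau\,\frac{y\,dx-x\,dy}{\delta}+dz,
\]
where $\tilde\tau$ is the ambient bundle curvature and $\epsilon=+1$ (resp.\ $\epsilon=-1$) in the Riemannian (resp.\ Lorentzian) case, allows both parts to be treated in parallel. The tangent fields $X_1=\partial_x+f_x\partial_z$ and $X_2=\partial_y+f_y\partial_z$ to the graph $z=f(x,y)$ satisfy $\eta(X_1)=\alpha$ and $\eta(X_2)=\beta$, so a direct computation gives
\[
g(X_i,X_j)=\frac{\delta_{ij}}{\delta^2}+\epsilon\,\eta(X_i)\eta(X_j),
\]
whence $\det(g_{ij})=\omega^2/\delta^4$ and the induced area element is $(\omega/\delta^2)\,dx\,dy$; positivity of the determinant in the Lorentzian case recovers exactly the spacelike hypothesis. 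Solving $g(N,X_i)=0$ together with $g(N,N)=\pm 1$ identifies the unit normal $N$ and shows that the angle function $\nu:=g(N,\partial_z)$ equals $\pm 1/\omega$.

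Next, I would apply the variational characterization of CMC graphs. Under $f\mapsto f+t\phi$ with $\phi$ compactly supported, one has $\dot\alpha=\phi_x$ and $\dot\beta=\phi_y$, and after an integration by parts
\[
\frac{d}{dt}\bigg|_{t=0}\int\frac{\omega}{\delta^2}\,dx\,dy \;=\; -\epsilon\int\phi\left[\partial_x\left(\frac{\alpha}{\omega}\right)+\partial_y\left(\frac{\beta}{\omega}\right)\right]\,dx\,dy.
\]
Equating this with the geometric first-variation formula $\pm\,2H_{\mathrm{surf}}\int g(V,N)\,dA=\pm(2H_{\mathrm{surf}}/\delta^2)\int\phi\,dx\,dy$ for the variation vector field $V=\phi\,\partial_z$, and invoking the fundamental lemma of the calculus of variations, I would land on
\[
\partial_x(\alpha/\omega)+\partial_y(\beta/\omega)=\frac{2H_{\mathrm{surf}}}{\delta^2},
\]
i.e.\ the equation $\mathrm{CMC}_{(\kappa,\tilde\tau,H_{\mathrm{surf}},\epsilon)}[f]$. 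For part (a) the triple of parameters is $(\tilde\tau,H_{\mathrm{surf}},\epsilon)=(\tau,H,+1)$, giving the claimed $\mathrm{CMC}_{(\kappa,\tau,H,1)}$; for part (b) it is $(H,\tau,-1)$, and the causal orientation of the timelike unit normal (past- versus future-pointing), combined with the sign convention for mean curvature of spacelike hypersurfaces, absorbs into an effective flip $H\mapsto -H$ in the bundle slot, producing the stated $\mathrm{CMC}_{(\kappa,-H,\tau,-1)}$.

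The main obstacle I expect is precisely this last piece of sign bookkeeping in the Lorentzian setting: the signature parameter $\epsilon$ in the metric, the past- versus future-pointing choice of the timelike unit normal, and the sign in the first-variation formula for spacelike hypersurfaces (where $\mathrm{div}_M N = -2H_{\mathrm{surf}}$ in an adapted orthonormal frame $\{e_1,e_2,N\}$ with $g(N,N)=-1$) all interact, and the uniform $\epsilon$-notation is exactly the kind of place where they are easy to get wrong. I would therefore verify their combined effect by carrying out the Lorentzian case once in full detail rather than by appeal to the parallel formalism, cross-checking via the direct divergence-form derivation $2H_{\mathrm{surf}}=\mp\,\mathrm{div}_M N$ applied to the explicit normal $N$ constructed in the first step.
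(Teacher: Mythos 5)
The paper itself does not prove this Proposition --- it is stated as a fact and used as input to Theorem \ref{DT2} --- so there is no internal proof to compare against; I can only assess your argument on its own terms. Your variational derivation is a legitimate route, and the first half is correct: writing the ambient metric as $\delta^{-2}(dx^2+dy^2)+\epsilon\,\eta\otimes\eta$, computing $\eta(X_1)=\alpha$, $\eta(X_2)=\beta$, $\det(g_{ij})=\omega^2/\delta^4$, and varying $\int \omega\delta^{-2}\,dx\,dy$ does produce $\partial_x(\alpha/\omega)+\partial_y(\beta/\omega)=2H_{\mathrm{surf}}/\delta^2$ with $\epsilon=+1$, which is part (a).

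The gap is in part (b), and it is exactly at the step you deferred. For the Lorentzian space $\mathbb{L}^3(\kappa,H)$ as the paper defines it, the connection form is $\eta=H\,\delta^{-1}(y\,dx-x\,dy)+dz$, so your own first step gives $\eta(X_1)=f_x+H\,y/\delta$ and $\eta(X_2)=f_y-H\,x/\delta$; the variational argument then lands on $\mathrm{CMC}_{(\kappa,+H,\pm\tau,-1)}$. The Proposition instead asserts $\mathrm{CMC}_{(\kappa,-H,\tau,-1)}$, whose data are $\tilde\alpha=f_x-H\,y/\delta$, $\tilde\beta=f_y+H\,x/\delta$ --- a genuinely different pair of functions when $H\neq 0$. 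The mechanism you invoke to bridge this (past- versus future-pointing normal, together with the sign convention for the mean curvature of spacelike hypersurfaces) cannot do the job: those choices are made \emph{after} the induced metric and the quantities $\alpha,\beta$ are already determined, so they can only flip the sign of the right-hand side $2\tau/\delta^2$ (the third slot of the subscript), never convert $f_x+Hy/\delta$ into $f_x-Hy/\delta$ inside the divergence (the second slot). Reconciling the two forms requires a further ingredient --- e.g.\ the orientation-reversing ambient isometries such as $(x,y,z)\mapsto(x,-y,z)$ or $(x,y,z)\mapsto(x,y,-z)$ identifying $\mathbb{L}^3(\kappa,H)$ with $\mathbb{L}^3(\kappa,-H)$, tracked through their action on graphs, on the normal, and on the divergence operator --- or an explicit declaration of the paper's sign convention for the Lorentzian connection form. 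Since the $-H$ in the second slot is precisely what makes the twin relation of Theorem \ref{DT2} close up (the Poincar\'e potential of the Riemannian equation satisfies $g_x-Hy/\delta=-\beta/\omega$), this is not a cosmetic sign: it is the nontrivial content of part (b), and your sketch neither identifies the correct mechanism nor carries out the verification.
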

We restate Theorem \ref{DT} more explicitly:
\begin{theorem}[Twin correspondence I - alternative version \cite{Lee10a}] \label{DT2}
Let $\Omega \subset {\mathcal{M}}_{\kappa}$ be a simply connected domain.\\
\textbf{(a)} We have the twin correspondence (up to adding a constant) between the solution $f: \Omega \rightarrow \mathbb{R}$ of the constant mean curvature equation $\mathrm{CMC}_{\left(\kappa, \tau, H, 1 \right)}$
\[
\frac{2H}{{\delta}^{2}} = \frac{\partial}{\partial x} \left( \frac{ \alpha }{ \sqrt{ 1+ {\delta}^{2} \left( {\alpha}^2 + {\beta}^2 \right)}} \right) + \frac{\partial}{\partial y} \left( \frac{\beta}{ \sqrt{ 1+ {\delta}^{2} \left( {\alpha}^2 + {\beta}^2 \right)}} \right)
\]
where $\left( \alpha, \beta \right) =\left( f_{x} + \tau \frac{y}{\delta}, f_{y} - \tau \frac{x}{\delta} \right)$ and the solution $g: \Omega \rightarrow \mathbb{R}$ of the constant mean curvature equation $\mathrm{CMC}_{\left(\kappa, -H, \tau, -1 \right)}$
\[
\frac{2 \tau}{{\delta}^{2}} = \frac{\partial}{\partial x} \left( \frac{ \tilde{\alpha}}{ \sqrt{ 1-{\delta}^{2} \left( {\tilde{\alpha}}^2 + {\tilde{\beta}}^2 \right)}} \right) + \frac{\partial}{\partial y} \left( \frac{\tilde{\beta}}{ \sqrt{ 1- {\delta}^{2} \left( {\tilde{\alpha}}^2 + {\tilde{\beta}}^2 \right)}} \right),
\]
where $\left( \tilde{\alpha}, \tilde{\beta} \right) =\left( g_{x} - H \frac{y}{\delta}, g_{y} + H \frac{x}{\delta} \right)$.
Here we assume that $g$ satisfies the spacelike condition $1- {\delta}^{2} \left( {\tilde{\alpha}}^2 + {\tilde{\beta}}^2 \right)>0$. \\
\textbf{(b)} The twin correspondence is obtained by solving the twin relation:
\[
\left( \tilde{\alpha} , \tilde{\beta} \right) = \left( - \frac{ \beta }{ \omega },
\frac{ \alpha }{ \omega} \right)
\quad\text{or equivalently}\quad
\left( \alpha , \beta \right) = \left( \frac{ \tilde{\beta} }{ \tilde{\omega} },
- \frac{ \tilde{\alpha} }{\tilde{\omega} } \right),
\]
where $\omega= \sqrt{ 1+ {\delta}^{2} \left( {\alpha}^2 + {\beta}^2 \right)}$ and $\tilde{\omega}=\sqrt{ 1- {\delta}^{2} \left( {\tilde{\alpha}}^2 + {\tilde{\beta}}^2 \right)}$. \\
\textbf{(c)} Since two integrability conditions in (b) are equivalent, the twin correspondence is involutive. The twin minimal surface $\widehat{\widehat{\Sigma}}$ of the twin surface $\widehat{\Sigma}$ of a minimal surface ${\Sigma}$
is congruent to ${\Sigma}$ up to vertical translations. We say that the graph $z=f(x,y)$ is the twin surface of the graph $z=g(x,y)$, and vice-versa.
\end{theorem}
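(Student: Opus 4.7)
\bigskip

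\textbf{Plan of proof.}

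The guiding idea is exactly the one flagged before the statement: Poincar\'{e}'s Lemma applies directly only to zero-divergence equations, so the first move must be to recast the CMC equation $\mathrm{CMC}_{(\kappa,\tau,H,1)}[f]$ into an equivalent divergence-free form. The base curvature lemma (Lemma \ref{bcl}) is the tool built for exactly this: its identity $\frac{2}{\delta^{2}} = \partial_{x}(x/\delta) + \partial_{y}(y/\delta)$ lets us write the non-homogeneous term $\frac{2H}{\delta^{2}}$ on the left-hand side as the divergence of the explicit vector field $(Hx/\delta,\; Hy/\delta)$, and move it across the equality. Thus the CMC equation for $f$ is equivalent to
\[
\frac{\partial}{\partial x}\!\left(\frac{\alpha}{\omega}-\frac{Hx}{\delta}\right) + \frac{\partial}{\partial y}\!\left(\frac{\beta}{\omega}-\frac{Hy}{\delta}\right) = 0 .
\]

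On the simply connected domain $\Omega$, this divergence-free condition is the curl-test for the rotated field $\bigl(-\beta/\omega+Hy/\delta,\; \alpha/\omega-Hx/\delta\bigr)$, so by Theorem \ref{poincare} there exists a potential $g:\Omega\to\mathbb{R}$, unique up to an additive constant, with
\[
g_{x} = -\frac{\beta}{\omega}+\frac{Hy}{\delta},\qquad g_{y} = \frac{\alpha}{\omega}-\frac{Hx}{\delta}.
\]
Rearranging these two equations using the definitions $\tilde{\alpha}=g_{x}-Hy/\delta$ and $\tilde{\beta}=g_{y}+Hx/\delta$ gives exactly the twin relation of part (b), namely $(\tilde{\alpha},\tilde{\beta}) = (-\beta/\omega,\; \alpha/\omega)$. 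This finishes the construction of $g$ from $f$.

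Next I would verify, from this twin relation alone, the two algebraic facts that make part (a) work. A direct squaring yields $\tilde{\alpha}^{2}+\tilde{\beta}^{2} = (\alpha^{2}+\beta^{2})/\omega^{2}$, and hence the crucial scalar identity
\[
1-\delta^{2}(\tilde{\alpha}^{2}+\tilde{\beta}^{2}) \;=\; \frac{1}{\omega^{2}}, \qquad \text{equivalently}\qquad \omega\,\tilde{\omega}=1.
\]
This automatically gives the spacelike condition for $g$, and reduces the twin relation to the symmetric second form $(\alpha,\beta)=(\tilde{\beta}/\tilde{\omega},-\tilde{\alpha}/\tilde{\omega})$ asserted in (b), from which involutivity (c) is immediate: running the construction from $g$ recovers $(\alpha,\beta)$, hence $f$ up to an additive constant, i.e.\ up to a vertical translation. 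To see that $g$ solves $\mathrm{CMC}_{(\kappa,-H,\tau,-1)}$, I use $\omega\tilde{\omega}=1$ to simplify $\tilde{\alpha}/\tilde{\omega}=-\beta$ and $\tilde{\beta}/\tilde{\omega}=\alpha$, so the right-hand side of the dual CMC equation collapses to $\alpha_{y}-\beta_{x}$; the mixed partials of $f$ cancel, and the base curvature lemma applied a second time gives $\alpha_{y}-\beta_{x} = \tau\bigl(\partial_{x}(x/\delta)+\partial_{y}(y/\delta)\bigr) = 2\tau/\delta^{2}$, which is the required dual equation.

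The main obstacle I expect is purely bookkeeping: keeping the signs straight between the twist terms $\tau\,y/\delta,\; -\tau\,x/\delta$ on the $\mathbb{E}^{3}(\kappa,\tau)$ side and the twist terms $-Hy/\delta,\; Hx/\delta$ on the $\mathbb{L}^{3}(\kappa,H)$ side, together with the sign flip hidden in the Lorentzian signature $\epsilon=-1$ that converts $\sqrt{1+\delta^{2}(\cdots)}$ into $\sqrt{1-\delta^{2}(\cdots)}$. There is no real analytic difficulty beyond Poincar\'{e}'s Lemma; the entire content of the theorem is that the base curvature lemma makes both reductions (CMC $\Rightarrow$ divergence-free, and divergence-free $\Rightarrow$ dual CMC) work with the same vector field $(x/\delta,\,y/\delta)$, thereby producing the mysterious exchange $H \leftrightarrow \tau$ of mean curvature and bundle curvature.
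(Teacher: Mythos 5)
Your proposal is correct and takes essentially the same route as the paper: the base curvature lemma converts the CMC equation to divergence-free form, Poincar\'{e}'s Lemma produces the potential $g$, and the dual equation follows from the twin relation together with a second application of the base curvature lemma. Your explicit derivation of $\omega\,\tilde{\omega}=1$ (which the paper leaves implicit when inverting the twin relation) is a nice touch, since it delivers the spacelike condition for $g$ for free.
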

\begin{proof}
Let $f: \Omega \rightarrow \mathbb{R}$ be a solution of $\mathrm{CMC}_{\left(\kappa, \tau, H, 1 \right)}$. The main idea is to employ the base curvature lemma (Lemma \ref{bcl} in Section \ref{sectionBCL})
\[
\frac{2}{{\delta}^{2}} = \frac{\partial}{\partial x} \left( \frac{x}{\delta} \right) + \frac{\partial}{\partial y} \left( \frac{y}{\delta} \right)
\]
to rewrite $\mathrm{CMC}_{\left(\kappa, \tau, H, 1 \right)}$:
\[
\frac{2H}{{\delta}^{2}} = \frac{\partial}{\partial x} \left( \frac{\alpha}{\omega} \right) + \frac{\partial}{\partial y} \left( \frac{\beta}{\omega} \right)
\]
in the divergence-zero form
\[
0 = \frac{\partial}{\partial x} \left( \frac{\alpha}{\omega} - H \frac{x}{\delta} \right)
+ \frac{\partial}{\partial y} \left( \frac{\beta}{\omega} - H \frac{y}{\delta} \right).
\]
We now use Poincar\'{e}'s Lemma to associate a function $g: \Omega \rightarrow \mathbb{R}$ such that
\[
\left( g_{x}, g_{y} \right) = \left( -\frac{\beta}{\omega} + H \frac{y}{\delta}, \frac{\alpha}{\omega} - H \frac{x}{\delta} \right).
\]
It thus follows that
\[
\left( \tilde{\alpha} , \tilde{\beta} \right) = \left( \frac{-\beta }{ \omega },
\frac{ \alpha }{ \omega} \right)
\]
or
\[
\left( \alpha , \beta \right) = \left( \frac{ \tilde{\beta} }{ \tilde{\omega} },
\frac{ - \tilde{\alpha} }{\tilde{\omega} } \right).
\]
We now combine the definition
\[
\left( \alpha, \beta \right) =\left( f_{x} + \tau \frac{y}{\delta}, f_{y} - \tau \frac{x}{\delta} \right),
\]
the integrability condition
\[
\frac{\partial}{\partial x} \left( f_{y} \right) - \frac{\partial}{\partial y} \left( f_{x} \right)=0,
\]
and the base curvature lemma
\[
\frac{2}{{\delta}^{2}} = \frac{\partial}{\partial x} \left( \frac{x}{\delta} \right) + \frac{\partial}{\partial y} \left( \frac{y}{\delta} \right)
\]
to deduce the equality
\[
- {\beta}_{x} + {\alpha}_{y} = - \frac{\partial}{\partial x} \left( f_{y} - \tau \frac{x}{\delta} \right) + \frac{\partial}{\partial y} \left(
f_{x} + \tau \frac{y}{\delta} \right) =\frac{2 \tau}{{\delta}^{2}}.
\]
Then by the twin relation
\[
\left( \alpha , \beta \right) = \left( \frac{ \tilde{\beta} }{ \tilde{\omega} },
\frac{ - \tilde{\alpha} }{\tilde{\omega} } \right),
\]
this becomes the equality
\[
\frac{\partial}{\partial x} \left( \frac{ \tilde{\alpha} }{\tilde{\omega} } \right) + \frac{\partial}{\partial y}
\left( \frac{ \tilde{\beta} }{ \tilde{\omega} } \right) = \frac{2 \tau}{{\delta}^{2}}.
\]
This means that the function $g: \Omega \rightarrow \mathbb{R}$ satisfies the $\mathrm{CMC}_{\left(\kappa, -H, \tau, -1 \right)}$. Working backwards, we also obtain the other direction, i.e. from $\mathrm{CMC}_{\left(\kappa, -H, \tau, -1 \right)}$ to $\mathrm{CMC}_{\left(\kappa, \tau, H, 1 \right)}$.
\end{proof}
\begin{corollary}[\cite{Lee10a}] \label{Bernstein}
We have the twin correspondence between the moduli space of \textit{entire} graphs with constant mean curvature $H$ in
$\mathbb{E}^{3}(\kappa, \tau)$ and that of \textit{entire} spacelike graphs with constant mean curvature $\tau$ in
$\mathbb{L}^{3}(\kappa, H)$.
\end{corollary}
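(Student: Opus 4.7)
The plan is to apply Theorem \ref{DT2} (part (b), (c)) with the largest possible simply connected domain, namely the whole base space ${\mathcal{M}}_{\kappa}$ itself. So first I would record the purely topological fact that ${\mathcal{M}}_{\kappa}$ is simply connected: in all three cases $\kappa<0$, $\kappa=0$, $\kappa>0$ the domain $\{(x,y)\in\mathbb{R}^2 \mid {\delta}_{\kappa}(x,y)>0\}$ is diffeomorphic to $\mathbb{R}^2$, so Poincar\'{e}'s Lemma applies globally. This lets me take $\Omega={\mathcal{M}}_{\kappa}$ in the construction of the twin potential.

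Next, I would chase an entire Riemannian CMC $H$ graph $z=f(x,y)$ in $\mathbb{E}^{3}(\kappa,\tau)$ through the proof of Theorem \ref{DT2}. Since the vector field $\left(-\frac{\beta}{\omega}+H\frac{y}{\delta},\,\frac{\alpha}{\omega}-H\frac{x}{\delta}\right)$ is globally defined and smooth on all of ${\mathcal{M}}_{\kappa}$, and since ${\mathcal{M}}_{\kappa}$ is simply connected, Poincar\'{e}'s Lemma yields an entire potential $g:{\mathcal{M}}_{\kappa}\to\mathbb{R}$. The only point requiring a brief verification is the spacelike inequality $1-\delta^{2}(\tilde{\alpha}^{2}+\tilde{\beta}^{2})>0$ that is hypothesised in Theorem \ref{DT2}. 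This, however, is automatic: plugging the twin relation $(\tilde{\alpha},\tilde{\beta})=\left(-\frac{\beta}{\omega},\frac{\alpha}{\omega}\right)$ into $\tilde{\omega}^{2}$ gives
\[
\tilde{\omega}^{2}=1-\frac{\delta^{2}(\alpha^{2}+\beta^{2})}{\omega^{2}}=\frac{1}{\omega^{2}},
\]
so $\tilde{\omega}=1/\omega>0$ everywhere, and in particular the twin $g$ is an entire spacelike CMC $\tau$ graph in $\mathbb{L}^{3}(\kappa,H)$.

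Conversely, starting from an entire spacelike CMC $\tau$ graph $z=g(x,y)$ in $\mathbb{L}^{3}(\kappa,H)$, the spacelike hypothesis $\tilde{\omega}>0$ holds throughout ${\mathcal{M}}_{\kappa}$, so the reverse integrability condition in part (b) of Theorem \ref{DT2} defines a smooth globally integrable one-form on the simply connected ${\mathcal{M}}_{\kappa}$. Poincar\'{e}'s Lemma again produces an entire potential $f:{\mathcal{M}}_{\kappa}\to\mathbb{R}$, which by the proof of Theorem \ref{DT2} satisfies $\mathrm{CMC}_{(\kappa,\tau,H,1)}$ and so is an entire CMC $H$ graph in $\mathbb{E}^{3}(\kappa,\tau)$.

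Finally, the involutivity recorded in part (c) of Theorem \ref{DT2} shows that these two constructions are inverse to each other up to vertical translation, so the assignment $f\mapsto g$ descends to a bijection between the two moduli spaces of entire graphs. The plan involves no serious obstacle beyond the spacelike verification above; the key conceptual point is the identity $\tilde{\omega}\,\omega=1$, which converts the spacelike condition from a constraint that must be checked into a built-in property of the Calabi-type twin transform.
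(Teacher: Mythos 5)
Your proof is correct and takes essentially the same route as the paper, whose entire proof of this corollary is the single line ``Take $\Omega={\mathcal{M}}_{\kappa}$ in Theorem \ref{DT} or Theorem \ref{DT2}.'' The details you supply --- that ${\mathcal{M}}_{\kappa}$ is simply connected in all three cases (a plane for $\kappa\geq 0$, a disk for $\kappa<0$) and that the spacelike condition is automatic because $\tilde{\omega}\,\omega=1$ --- are accurate and are precisely what the paper leaves implicit.
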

\begin{proof}
Take $\Omega={\mathcal{M}}_{\kappa}$ in Theorem \ref{DT} or Theorem \ref{DT2}.
\end{proof}

\subsection{Twin surfaces in simultaneous conformal coordinates} \label{reaingWeir}
We can show the existence of simultaneous conformal coordinates for
the graphs of constant mean curvature $H$ in
$\mathbb{E}^{3}(\kappa, \tau)$ and
its twin graphs of constant mean curvature $\tau$ in $\mathbb{L}^{3}(\kappa, H)$.
\begin{prop}[\cite{Lee09a}] \label{cdn}
Let $\zeta=\xi + i \eta $ denote the conformal parameter of the graph $z=f(x,y)$ of constant mean curvature $H$ in $\mathbb{E}^{3}(\kappa, \tau)$.
Then, $\zeta=\xi + i \eta$ also becomes the conformal parameter of its twin graph $z=g(x,y)$ of constant mean curvature $\tau$ in $\mathbb{L}^{3}(\kappa, H)$.
\end{prop}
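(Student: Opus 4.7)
The plan is to prove that the two induced metrics on the twin graphs are \emph{pointwise conformally equivalent} on the common domain $\Omega$, so that any isothermal chart for one is automatically isothermal for the other. Concretely, I expect to show
\[
ds^{2}_{\mathbb{L}^{3}(\kappa,H)} \;=\; \frac{1}{\omega^{2}}\, ds^{2}_{\mathbb{E}^{3}(\kappa,\tau)},
\]
where $\omega=\sqrt{1+\delta^{2}(\alpha^{2}+\beta^{2})}$ is the quantity appearing in the twin relation of Theorem \ref{DT2}(b). Since pointwise rescaling preserves angles, this identity is stronger than needed: it forces the two conformal structures on $\Omega$ to coincide, and the proposition follows at once.

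To carry this out I would first substitute $dz=f_{x}\,dx+f_{y}\,dy$ into the BCV metric and regroup using $(\alpha,\beta)=(f_{x}+\tau y/\delta,\, f_{y}-\tau x/\delta)$, obtaining the clean form
\[
ds^{2}_{\mathbb{E}^{3}(\kappa,\tau)} \;=\; \frac{dx^{2}+dy^{2}}{\delta^{2}} \;+\; (\alpha\, dx+\beta\, dy)^{2}.
\]
The analogous computation for the twin graph $z=g(x,y)$ in $\mathbb{L}^{3}(\kappa,H)$, using $(\tilde{\alpha},\tilde{\beta})=(g_{x}-Hy/\delta,\, g_{y}+Hx/\delta)$, yields
\[
ds^{2}_{\mathbb{L}^{3}(\kappa,H)} \;=\; \frac{dx^{2}+dy^{2}}{\delta^{2}} \;-\; (\tilde{\alpha}\, dx+\tilde{\beta}\, dy)^{2}.
\]

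Next I would plug in the twin relation $(\tilde{\alpha},\tilde{\beta})=(-\beta/\omega,\,\alpha/\omega)$ from Theorem \ref{DT2}(b), which turns the Lorentzian induced metric into
\[
ds^{2}_{\mathbb{L}^{3}(\kappa,H)} \;=\; \frac{dx^{2}+dy^{2}}{\delta^{2}} \;-\; \frac{1}{\omega^{2}}(-\beta\, dx+\alpha\, dy)^{2}.
\]
The identity $ds^{2}_{\mathbb{L}^{3}(\kappa,H)}=\omega^{-2}\,ds^{2}_{\mathbb{E}^{3}(\kappa,\tau)}$ then reduces to the algebraic check
\[
\omega^{2}\!\left(\frac{1}{\delta^{2}}+\alpha^{2}\right)-(1+\alpha^{2}\delta^{2})=\omega^{2}\!\left(\frac{1}{\delta^{2}}+\beta^{2}\right)-(1+\beta^{2}\delta^{2}),
\]
together with the cross-term comparison $\alpha\beta=\alpha\beta$, and this is precisely the definition $\omega^{2}=1+\delta^{2}(\alpha^{2}+\beta^{2})$ arranged in disguise. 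The only potentially delicate point is the bookkeeping with signs and with the conformal factor $\delta$, but no new geometric input is required beyond the twin relation.

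Finally, given the pointwise proportionality, if $\zeta=\xi+i\eta$ is an isothermal chart for the Riemannian graph, so that $ds^{2}_{\mathbb{E}^{3}(\kappa,\tau)}=\Lambda(d\xi^{2}+d\eta^{2})$ for a positive function $\Lambda$, then automatically
\[
ds^{2}_{\mathbb{L}^{3}(\kappa,H)}\;=\;\frac{\Lambda}{\omega^{2}}\bigl(d\xi^{2}+d\eta^{2}\bigr),
\]
which is positive definite by the spacelike hypothesis $\tilde{\omega}=1/\omega>0$. Hence $\zeta$ is also an isothermal chart for the twin spacelike graph, completing the proof. The main obstacle, as noted, is merely to carry out the substitutions cleanly; the conceptual content is the neat cancellation enforced by the very definition of $\omega$.
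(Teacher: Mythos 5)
Your argument is essentially the paper's own: both hinge on the twin relation $(\tilde{\alpha},\tilde{\beta})=(-\beta/\omega,\,\alpha/\omega)$ and the cancellation forced by $\omega^{2}=1+\delta^{2}(\alpha^{2}+\beta^{2})$. The paper organizes the computation in the conformal parameter, expanding $\partial\widehat{F}/\partial\zeta$ and $\partial\widehat{G}/\partial\zeta$ in orthonormal frames and showing $\langle \widehat{G}_{\zeta},\widehat{G}_{\zeta}\rangle=\omega^{-2}\langle \widehat{F}_{\zeta},\widehat{F}_{\zeta}\rangle$, whereas you prove the pointwise proportionality of the real induced metrics in the $(x,y)$ chart and then observe that isothermal charts are preserved under conformal rescaling; your identity $ds^{2}_{\mathbb{L}^{3}(\kappa,H)}=\omega^{-2}\,ds^{2}_{\mathbb{E}^{3}(\kappa,\tau)}$ is precisely the paper's Proposition \ref{cdnmore}, so if anything your route proves slightly more. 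One correction is needed, though: the displayed ``algebraic check''
\[
\omega^{2}\Bigl(\tfrac{1}{\delta^{2}}+\alpha^{2}\Bigr)-(1+\alpha^{2}\delta^{2})=\omega^{2}\Bigl(\tfrac{1}{\delta^{2}}+\beta^{2}\Bigr)-(1+\beta^{2}\delta^{2})
\]
is not an identity (it is equivalent to $(\alpha^{2}-\beta^{2})(\omega^{2}-\delta^{2})=0$) and is not what the coefficient comparison reduces to. Comparing the $dx^{2}$ and $dy^{2}$ coefficients actually gives
\[
\omega^{2}-\beta^{2}\delta^{2}=1+\alpha^{2}\delta^{2}
\qquad\text{and}\qquad
\omega^{2}-\alpha^{2}\delta^{2}=1+\beta^{2}\delta^{2},
\]
each of which is indeed the definition of $\omega^{2}$ in disguise; with that substitution your proof goes through.
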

\begin{proof} We write $F(x,y)=(x,y,f(x,y)$ and $G(x,y)=(x,y,g(x,y))$. We consider a coordinate transformation
$\phi:(x,y) \rightarrow \left( \xi, \eta \right)$
to obtain the conformal parametrization of the graph $z=f(x,y)$ in $\mathbb{E}^{3}(\kappa, \tau)$:
\[
\widehat{F}=F \circ {\varphi}^{-1} : \left( \xi, \eta \right) \mapsto (x,y) \mapsto (x,y, f(x,y))
\]
Our job is to show that
\[
\widehat{G}=G \circ {\varphi}^{-1} : \left( \xi, \eta \right) \mapsto (x,y) \mapsto (x,y, g(x,y))
\]
is also a conformal parametrization of the twin graph $z=g(x,y)$ in $\mathbb{L}^{3}(\kappa, H)$. Let
\[
\left( \xi, \eta \right) \mapsto (x,y)=\left(p \left( \xi, \eta \right) , q \left( \xi, \eta \right) \right)
\]
denote the inverse coordinate transformation ${\phi}^{-1}: \left( \xi, \eta \right) \rightarrow (x,y)$.
We prepare the Riemannian orthonormal frame in $\mathbb{E}^{3}(\kappa, \tau)$
\[
E_{1}= {\delta}_{\kappa}{\partial}_{x} - \tau y {\partial}_{z}, \;\; E_{2}= {\delta}_{\kappa}{\partial}_{y} + \tau x {\partial}_{z}, \;\; E_{3}= {\partial}_{z}
\]
and the Lorentzian orthonormal frame in $\mathbb{L}^{3}(\kappa, H)$
\[
L_{1}= {\delta}_{\kappa}{\partial}_{x} - H y {\partial}_{z},\;\; L_{2}={\delta}_{\kappa}{\partial}_{y} + H x {\partial}_{z}, \;\; L_{3}= {\partial}_{z}.
\]
With the complexifed differential operator $\frac{\partial}{\partial \zeta}= \frac{1}{2} \left( \frac{\partial}{\partial \xi} -i \frac{\partial}{\partial \eta} \right)$, the Chain Rule gives
\[
\frac{\partial \widehat{F}}{\partial \zeta} = \frac{1}{ {\delta}_{\kappa} } \frac{\partial p}{\partial \zeta} E_{1} + \frac{1}{ {\delta}_{\kappa} } \frac{\partial q}{\partial \zeta} E_{2} + \left( \alpha \frac{\partial p}{\partial \zeta} + \beta \frac{\partial q}{\partial \zeta} \right) E_{3}
\]
and
\[
\frac{\partial \widehat{G}}{\partial \zeta} = \frac{1}{ {\delta}_{\kappa} } \frac{\partial p}{\partial \zeta} L_{1} + \frac{1}{ {\delta}_{\kappa} } \frac{\partial q}{\partial \zeta} L_{2} + \left(\tilde{\alpha} \frac{\partial p}{\partial \zeta} + \tilde{\beta} \frac{\partial q}{\partial \zeta} \right) L_{3}.
\]
Since $\widehat{F}$ is the isothermal parametrization, we see that $ \langle \frac{\partial \widehat{F}}{\partial \zeta}, \frac{\partial \widehat{F}}{\partial \zeta} \rangle$ vanishes. To prove that $\widehat{G}$ becomes the isothermal parametrization, we verify that $\langle \frac{\partial \widehat{G}}{\partial \zeta}, \frac{\partial \widehat{G}}{\partial \zeta} \rangle$ also vanishes. Using the twin relation
$ \left( \tilde{\alpha} , \tilde{\beta} \right) = \left( - \frac{ \beta }{ \omega }, \frac{ \alpha }{ \omega} \right)$, we deduce
\[
\langle \frac{\partial \widehat{G}}{\partial \zeta}, \frac{\partial \widehat{G}}{\partial \zeta} \rangle
=  \frac{1}{{\omega}^{2}} \langle \frac{\partial \widehat{F}}{\partial \zeta}, \frac{\partial \widehat{F}}{\partial \zeta} \rangle.
\]
Indeed, we compute
\[\label{lasces}
\begin{array}{lll}
&& \langle \frac{\partial \widehat{G}}{\partial \zeta}, \frac{\partial \widehat{G}}{\partial \zeta} \rangle \\
&=& \left( \frac{1}{{\delta}_{\kappa}} \frac{\partial p}{\partial \zeta} \right)^{2} + \left( \frac{1}{{\delta}_{\kappa}} \frac{\partial q}{\partial \zeta} \right)^{2}
- \left( \tilde{\alpha} \frac{\partial p}{\partial \zeta} + \tilde{\beta} \frac{\partial q}{\partial \zeta} \right)^{2} \\
&=& \left( \frac{1- {\tilde{\alpha} }^{2} {{\delta}_{\kappa}}^{2} }{{{\delta}_{\kappa}}^{2} } \right) { \left( \frac{\partial p}{\partial \zeta} \right)}^{2} +
\left( \frac{1-{ \tilde{\beta} }^{2} {{\delta}_{\kappa}}^{2}}{ {{\delta}_{\kappa}}^{2} } \right) {\left( \frac{\partial q}{\partial \zeta} \right) }^{2} - 2 \tilde{\alpha} \tilde{\beta} \frac{\partial p}{\partial \zeta} \frac{\partial q}{\partial \zeta} \\
&=& \left( \frac{1 + {\alpha}^2 {{\delta}_{\kappa}}^{2} }{{\omega}^{2} {{\delta}_{\kappa}}^{2} } \right) {\left( \frac{\partial p}{\partial \zeta} \right) }^{2} + \left( \frac{1 + {\beta}^2 {{\delta}_{\kappa}}^{2} }{{\omega}^{2} {{\delta}_{\kappa}}^{2} } \right) {\left( \frac{\partial q}{\partial \zeta} \right) }^{2} - 2 \left( - \frac{ \alpha \beta}{{\omega}^{2} } \right) \frac{\partial p}{\partial \zeta} \frac{\partial q}{\partial \zeta} \\
&=& \frac{1}{{\omega}^{2}} \left[ \left( \frac{1}{{\delta}_{\kappa}} \frac{\partial p}{\partial \zeta} \right)^{2} + \left( \frac{1}{{\delta}_{\kappa}} \frac{\partial q}{\partial \zeta} \right)^{2}
+ \left( {\alpha} \frac{\partial p}{\partial \zeta} + {\beta} \frac{\partial q}{\partial \zeta} \right)^{2} \right] \\
&=& \frac{1}{{\omega}^{2}} \langle \frac{\partial \widehat{F}}{\partial \zeta}, \frac{\partial \widehat{F}}{\partial \zeta} \rangle.
\end{array}
\]
This completes the proof of Lemma \ref{cdn}.
\end{proof}

 We can read local geometric information of twin surfaces using the simultaneous conformal coordinates.
 
\begin{prop}[\cite{Lee09a}] \label{cdnmore} When the twin correspondence transforms the graph $\Sigma$ of constant mean curvature $H$ in $\mathbb{E}^{3}(\kappa, \tau)$ to the spacelike graph ${\Sigma}^{*}$ of constant mean curvature $\tau$ in $\mathbb{L}^{3}(\kappa, H)$, the metric ${\mathrm{I}}_{{}_{\Sigma}}$ corresponds to the metric  ${\mathrm{I}}_{{}_{{\Sigma}^{*}}}=u^2 {\mathrm{I}}_{{}_{\Sigma}}$, where $u$ denotes the angle function of $\Sigma$ induced by the vertical Killing vector field ${\partial}_{z}$ in $\mathbb{E}^{3}(\kappa, \tau)$. By the twin correspondence, the complexified Gauss map ${g}_{{}_{{\Sigma}}}$   corresponds to  the complexified Gauss map ${g}_{{}_{{\Sigma}^{*}}} = i \, {g}_{{}_{{\Sigma}}}$. 
\end{prop}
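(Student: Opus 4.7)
The plan is to exploit the explicit orthonormal frames $\{E_1, E_2, E_3\}$ on $\mathbb{E}^{3}(\kappa,\tau)$ and $\{L_1, L_2, L_3\}$ on $\mathbb{L}^{3}(\kappa,H)$ already introduced in the proof of Proposition \ref{cdn}, compute the first fundamental forms and unit normals of $\Sigma$ and $\Sigma^{\ast}$ directly in these frames, and then translate everything by means of the twin relation $\bigl(\tilde{\alpha}, \tilde{\beta}\bigr) = \bigl(-\beta/\omega, \alpha/\omega\bigr)$.

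The first thing I would record is the scalar identity $\omega \tilde{\omega}=1$, which follows from the twin relation together with the definition of $\omega$ and $\tilde{\omega}$: substitution gives $\tilde{\omega}^{2} = 1 - {\delta}^{2}({\alpha}^{2}+{\beta}^{2})/{\omega}^{2} = \bigl({\omega}^{2}-{\delta}^{2}({\alpha}^{2}+{\beta}^{2})\bigr)/{\omega}^{2} = 1/{\omega}^{2}$. This reciprocity is the algebraic backbone of every subsequent computation, and it is the reason the conformal factor has such a clean form.

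Next I would compute the three steps in sequence. \textbf{(Step A, metric.)} From $F(x,y) = (x,y,f(x,y))$ one reads off $F_{x} = \frac{1}{\delta}E_{1} + \alpha E_{3}$ and $F_{y} = \frac{1}{\delta}E_{2} + \beta E_{3}$, so that the components of $\mathrm{I}_{\Sigma}$ are $\bigl(\tfrac{1}{\delta^{2}}+\alpha^{2},\ \alpha\beta,\ \tfrac{1}{\delta^{2}}+\beta^{2}\bigr)$. The analogous Lorentzian decomposition of $G(x,y) = (x,y,g(x,y))$ in the frame $\{L_{1},L_{2},L_{3}\}$ yields the components $\bigl(\tfrac{1}{\delta^{2}}-\tilde{\alpha}^{2},\ -\tilde{\alpha}\tilde{\beta},\ \tfrac{1}{\delta^{2}}-\tilde{\beta}^{2}\bigr)$ of $\mathrm{I}_{\Sigma^{\ast}}$. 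Substituting the twin relation and clearing denominators with $\omega^{2}=1+\delta^{2}(\alpha^{2}+\beta^{2})$ collapses each entry into $\omega^{-2}$ times the corresponding entry of $\mathrm{I}_{\Sigma}$. \textbf{(Step B, angle function.)} Solving $\langle N, F_{x}\rangle = \langle N, F_{y}\rangle = 0$ with $|N|=1$ in $\mathbb{E}^{3}(\kappa,\tau)$ yields $N = \omega^{-1}\bigl(-\alpha\delta E_{1} - \beta\delta E_{2} + E_{3}\bigr)$, so the angle function with respect to $\partial_{z}=E_{3}$ is exactly $u=1/\omega$. Combining with Step A gives $\mathrm{I}_{\Sigma^{\ast}} = u^{2}\,\mathrm{I}_{\Sigma}$.

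Finally, \textbf{(Step C, Gauss map.)} The same orthogonality computation in $\mathbb{L}^{3}(\kappa,H)$ produces the future-pointing unit timelike normal $N^{\ast} = -\delta\beta\, L_{1} + \delta\alpha\, L_{2} + \omega\, L_{3}$, where the coefficient $\omega$ (rather than $1/\tilde{\omega}$) appears thanks precisely to $\tilde{\omega}=1/\omega$. Stereographically projecting $N$ from $-E_{3}$ onto $\mathbb{C}$ and $N^{\ast}$ from $-L_{3}$ onto the Poincar\'{e} disc gives $g_{\Sigma} = -\delta(\alpha+i\beta)/(1+\omega)$ and $g_{\Sigma^{\ast}} = \bigl(-\delta\beta + i\,\delta\alpha\bigr)/(1+\omega) = i\,\delta(\alpha+i\beta)/(1+\omega)$, and taking the quotient proves $g_{\Sigma^{\ast}} = i\, g_{\Sigma}$.

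The main obstacle I expect is not the algebra, which is short once $\omega\tilde{\omega}=1$ is in hand, but rather keeping sign and orientation conventions consistent, especially for the unit timelike normal of a spacelike surface and the choice of pole in the stereographic projection that defines the complexified Gauss map on the hyperboloid model. Getting the multiplier exactly $+i$ (as opposed to $-i$) requires fixing the orientation of $\Sigma^{\ast}$ and of the disc model coherently with the orientation of $\Sigma$ and the sphere model; once the conventions of \cite{Lee09a} are pinned down, the rest of the proof reduces to substitution in the twin relation.
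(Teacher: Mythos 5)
Your approach is essentially the one the paper itself relies on: the survey does not reproduce a proof of Proposition \ref{cdnmore} (it is quoted from the preprint \cite{Lee09a}), but the core computation is exactly the frame calculation carried out in the proof of Proposition \ref{cdn}, where the decompositions $F_{x}=\frac{1}{\delta}E_{1}+\alpha E_{3}$, $F_{y}=\frac{1}{\delta}E_{2}+\beta E_{3}$ and their Lorentzian counterparts, combined with the twin relation $(\tilde{\alpha},\tilde{\beta})=(-\beta/\omega,\alpha/\omega)$, produce the factor $1/\omega^{2}$; your Step A redoes this directly in the $(x,y)$ coordinates for all three metric coefficients rather than for $\langle \partial_{\zeta}\widehat{G},\partial_{\zeta}\widehat{G}\rangle$ alone, and your identification $u=1/\omega$ from the unit normal is correct, so $\mathrm{I}_{\Sigma^{*}}=u^{2}\mathrm{I}_{\Sigma}$ follows. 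The identity $\omega\tilde{\omega}=1$ that you isolate as the backbone is indeed the $n=1$ case of the angle duality (c3) of Theorem \ref{twintwo2}. The one blemish is in Step C: with the formulas exactly as you wrote them, $g_{\Sigma^{*}}=i\,\delta(\alpha+i\beta)/(1+\omega)$ equals $-i\,g_{\Sigma}$, not $+i\,g_{\Sigma}$, since $g_{\Sigma}=-\delta(\alpha+i\beta)/(1+\omega)$; you flag this orientation ambiguity yourself, and it is resolved by fixing the projection pole or the choice of timelike normal consistently with \cite{Lee09a}, but as written the final quotient is off by a sign. Everything else checks out.
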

%\begin{proof} See \cite{Lee09a}. \end{proof}

\subsection{Sister surfaces and twin surfaces}
Daniel \cite{Dan07, FM10} proved a fascinating result that Lawson's isometric correspondence
for constant mean curvature surfaces in three dimensional space forms extends to Riemannian
Bianchi--Cartan--Vranceanu spaces $\mathbb{E}^{3}(\kappa, \tau)$.
For instance, we have an isometric duality between minimal
surfaces in Riemannian Heisenberg space ${\mathrm{Nil}}^{3}={\mathrm{Nil}}^{3}\left(\frac{1}{2}\right)={\mathbb{E}}^{3}\left(0, \frac{1}{2} \right)$ and surfaces of constant mean curvature $\frac{1}{2}$ in Riemannian product space $\mathbb{H}^{2} \times \mathbb{R}={\mathbb{E}}^{3}\left(-1, 0 \right)$. Two such surfaces related by Daniel's correspondence are called
\textit{sister surfaces}. The following commutative diagram illustrates the twin correspondence and Daniel's sister correspondence simultaneously:
\begin{center}
\begin{tabular}{ c c c }
{ { \textbf{ {minimal graphs} } } } & & { { \textbf{ {spacelike CMC $\tau$-graphs}} } } \\
{ { \textbf{ {in ${\mathbb{E}}^{3}(\kappa, \tau)$} } } }
& & { { \textbf{ {in $ \mathbb{L}^{3}(\kappa, 0)$} } } } \\
& {{\textbf{ }}} & \\
& {\Huge $\circlearrowright$} & \\
& {{\textbf{ }}} & \\
{ { \textbf{ {CMC $\tau$-graphs} }}} & & {{\textbf{ {maximal graphs}}}} \\
{ { \textbf{ {in $\mathbb{E}^{3}( \kappa-4 {\tau}^{2}, 0)$ } }}} & & {{\textbf{ {in ${\mathbb{L}}^{3}(\kappa-4 {\tau}^{2}, \tau)$}}}} \\
\end{tabular}
\end{center}
% The proof uses that the conformal metric formula ${ds_{\widehat{\Sigma}}}^{2} = {u}^{2} {ds_{\Sigma}}^{2}$
% from Proposition \ref{cdnmore} and that sister surfaces share the positive angle function $u$ induced by
% the vertical Killing vector field.
\begin{remark}
The above commutative diagram induces isometric correspondences for spacelike graphs of constant mean curvature
in Lorentzian Bianchi--Cartan--Vranceanu space.
\end{remark}
\begin{remark}[\textit{complete vs.\;\textit{entire}}]
Daniel and Hauswirth \cite{DH09} showed that a minimal surface in ${\mathrm{Nil}}^{3}$ is an \textit{entire} graph if and only if its sister surface with constant mean curvature $\frac{1}{2}$ in $\mathbb{H}^{2} \times \mathbb{R}$
is an \textit{entire} graph. They showed that a \textit{complete} minimal surface in ${\mathrm{Nil}}^{3}$ which is transverse to
the vertical Killing vector field is an \textit{entire} graph. Under the twin correspondence, an \textit{entire} graph $\Sigma$ with constant mean curvature $H$ in $\mathbb{E}^{3}(\kappa, \tau)$ corresponds to an \textit{entire} spacelike twin graph $\widehat{\Sigma}$ with constant mean curvature $\tau$ in $\mathbb{L}^{3}(\kappa, H)$. By Proposition \ref{cdnmore}, the twin correspondence shrinks the induced metric ${ds_{\widehat{\Sigma}}}^{2}\leq{ds_{\Sigma}}^{2}$. In general, the completeness of entire graphs in Lorentzian Bianchi--Cartan--Vranceanu space is not guaranteed. Indeed, Albujer \cite{Alb09} constructed  entire \textit{incomplete} graphs with zero mean curvature in Lorentzian product space $\mathbb{H}^{2} \times \mathbb{R}_{1}=\mathbb{L}^{3}(-1, 0)$.
\end{remark}
\subsection{Calabi--Chern type problem in Lorentzian Heisenberg space} \label{entireLHG}
The Riemannian Heisenberg space ${\mathrm{Nil}}^{3}(\tau)=\mathbb{E}^{3}\left(0, \tau\right)$ is $\mathbb{R}^{3}$ endowed with the metric
\[
dx^2 +dy^2 + \left[ \tau \left( y dx - x dy \right) + dz \right]^2.
\]
Fern\'{a}ndez and Mira \cite{FM09} parameterized the moduli space of
entire minimal graphs in ${\mathrm{Nil}}^{3}=\mathbb{E}^{3}\left(0, \frac{1}{2}\right)$ using holomorphic quadratic differentials.
There exist lots of entire minimal graphs in Riemannian Heisenberg space ${\mathrm{Nil}}^{3}$ \cite{DH09, FM09, FM10}. We here employ the twin correspondence to show that there exist no entire spacelike graphs with zero mean curvature in Lorentzian Heisenberg space ${\mathrm{Nil}}^{3}_{1}(\tau)=\mathbb{L}^{3}\left(0, \tau\right)$, which is $\mathbb{R}^{3}$ equipped with the metric
\[
dx^2 +dy^2 - \left[ \tau \left( y dx - x dy \right) + dz \right]^2.
\]
\begin{remark}
The affine function $q(x,y)={\mu}_{1}x+{\mu_{2}}y$ satisfies the zero mean curvature equation in ${\mathrm{Nil}}^{3}_{1}\left( \tau \right)$. When $\tau \neq 0$, the spacelike condition rules out entire solutions:
$0 < 1 - \left(q_{x}+\tau y \right)^2 - \left( q_{y} - \tau x \right)^2 = 1 - \left({\mu}_{1}+\tau y \right)^2 - \left( {\mu}_{2}- \tau x \right)^2$.
\end{remark}
\begin{theorem}[Calabi--Chern type problem in Lorentzian Heisenberg space \cite{Lee09a}] \label{ChernLHG}
There is no entire spacelike graph with zero mean curvature in ${\mathrm{Nil}}^{3}_{1}(\tau)$, $\tau \neq 0$.
\end{theorem}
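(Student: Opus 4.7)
The plan is to deploy the twin correspondence (Theorem \ref{DT2} and Corollary \ref{Bernstein}) to reduce the nonexistence statement in $\mathrm{Nil}^{3}_{1}(\tau)$ to an equivalent nonexistence statement for entire constant mean curvature graphs in Euclidean $\mathbb{R}^{3}$, where Chern's theorem (in the two-dimensional case essentially due to Heinz) applies directly. The key numerical observation is that in the parameter dictionary of Theorem \ref{DT}, the bundle curvature on the Lorentzian side plays the role of the mean curvature on the Riemannian side, while the mean curvature on the Lorentzian side becomes the bundle curvature on the Riemannian side. Consequently, a maximal ($\tau_{L}=0$) spacelike graph in a Lorentzian Heisenberg space with nontrivial bundle curvature must be twinned with a non-minimal CMC graph in a Riemannian space whose bundle curvature vanishes, which is just flat $\mathbb{R}^{3}$.

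More precisely, I would argue by contradiction. Suppose $\Sigma^{*}$ were an entire spacelike graph of zero mean curvature in $\mathrm{Nil}^{3}_{1}(\tau)=\mathbb{L}^{3}(0,\tau)$ with $\tau \neq 0$. In the notation of Theorem \ref{DT}, this is an entire spacelike CMC $\tau_{L}=0$ graph in $\mathbb{L}^{3}(\kappa, H)$ with $(\kappa, H)=(0, \tau)$. Since $\kappa=0$, the base $\mathcal{M}_{0}=\mathbb{R}^{2}$ is simply connected, so Poincaré's lemma is available and Corollary \ref{Bernstein} produces an entire twin graph $\Sigma$ of constant mean curvature $H=\tau$ in the space $\mathbb{E}^{3}(\kappa, \tau_{L})=\mathbb{E}^{3}(0,0)$, which is Euclidean $\mathbb{R}^{3}$. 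Chern's theorem then forces the mean curvature of $\Sigma$ to vanish, contradicting $\tau \neq 0$, so no such $\Sigma^{*}$ can exist.

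The proof is essentially bookkeeping once the twin correspondence has been set up, and I do not expect any serious obstacle. The only technical points worth verifying are that the spacelike inequality $1-\delta^{2}(\tilde{\alpha}^{2}+\tilde{\beta}^{2})>0$ satisfied by $\Sigma^{*}$ propagates consistently through the twin relation and that the potential function produced by Poincaré's lemma actually defines a genuine entire graph in $\mathbb{R}^{3}$, but these checks are already encapsulated in Theorem \ref{DT2} and Corollary \ref{Bernstein}. The conceptual content of the argument is that the twin correspondence trades the Calabi--Chern obstruction to entire spacelike maximal graphs in $\mathrm{Nil}^{3}_{1}(\tau)$ for the classical Bernstein/Heinz-type obstruction to entire non-minimal CMC graphs in flat three-space.
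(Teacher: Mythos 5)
Your argument is correct and follows exactly the paper's own route: apply Corollary \ref{Bernstein} to pass from an entire spacelike zero-mean-curvature graph in $\mathbb{L}^{3}(0,\tau)$ to an entire CMC $\tau$ graph in $\mathbb{E}^{3}(0,0)=\mathbb{R}^{3}$, then invoke Chern's theorem to force $\tau=0$. The parameter bookkeeping you spell out is precisely the content of the paper's two-line proof, so there is nothing to add.
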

\begin{proof}
We assume to the contrary that there exists an entire spacelike graph
with zero constant mean curvature in ${\mathrm{Nil}}^{3}_{1}(\tau)=\mathbb{L}^{3}(0, \tau)$. By the twin correspondence in
Corollary \ref{Bernstein}, its twin graph with constant mean curvature $\tau$ in
$\mathbb{R}^{3}=\mathbb{E}^{3}(0, 0)$ is entire. By Chern's Theorem \cite{Ch65} that the only entire graphs with constant mean curvature in $\mathbb{R}^{3}$ are minimal (and so they are planes), the mean curvature $\tau$ of twin graph
vanishes, which contradicts for $\tau \neq 0$.
\end{proof}
\begin{remark}
In \cite{Lee10b}, we investigated maximal surfaces in Lorentzian Heisenberg space. We showed that their Gauss maps become
harmonic maps into the standard sphere $\mathbb{S}^{2}$. We also established Kenmotsu type representation formula for maximal surfaces. The holomorphicity of Abresch--Rosenberg type differential on maximal surfaces is proved.
We used the twin correspondence to construct explicit examples of maximal graphs.
\end{remark}
\subsection{Minimal graphs in Riemannian Heisenberg space} \label{graphHG}
We apply the twin correspondence to spacelike surfaces of constant mean curvature $\tau$ in $\mathbb{L}^{3}$
to construct  minimal surfaces in Riemannian Heisenberg space ${\mathrm{Nil}}^{3}(\tau)$.
\begin{example}[{Helicoidal surfaces in $\mathbb{L}^{3}$ and catenoids in ${\mathrm{Nil}}^{3}\left( \tau \right)$}]
Let $\lambda \geq 0$. We begin with the helicoidal surface with constant mean curvature $\tau$ in ${\mathbb{L}}^{3}$
\[
z=g(x,y)= \lambda \arctan{\left( \frac{y}{x} \right)} + \tau h \left( \sqrt{x^2 +y^2} \right).
\]
Its twin surface $z=f(x,y)=\lambda \rho\left( \sqrt{x^2 +y^2} \right)$ is the half catenoid in ${\mathrm{Nil}}^{3}\left( \tau \right)$ over $\sqrt{x^2 +y^2} \geq \lambda$. Two functions $h: [\lambda, \infty) \rightarrow \mathbb{R}$ and $\rho: [\lambda, \infty) \rightarrow \mathbb{R}$
satisfy
\[
\frac{d}{dt} h \left( t \right) = \sqrt{ \frac{ t^{2} - {\lambda}^{2} }{{\tau}^{2} t^2 + 1 } }, \quad
\frac{d}{dt} \rho \left( t \right) = \sqrt{ \frac{{\tau}^{2} t^2 + 1 }{ t^{2} - {\lambda}^{2} } }.
\]
The catenoid in ${\mathrm{Nil}}^{3}\left( \tau \right)$ admits the following conformal parametrization.
\[
X(u, \theta)= \left(\lambda \cosh \left( w(u) \right) \cos \left( \theta + \tau \lambda u \right),
\lambda \cosh \left( w(u) \right) \sin \left( \theta + \tau \lambda u \right),
\lambda \rho( w(u) ) \right)
\]
The one-variable function $w(u)$ satisfies the ordinary differential equation
\[
\frac{d}{du} w \left( u \right) = \sqrt{{\tau}^{2} {\lambda}^{2} \cosh^{2} \left( w(u) \right) +1 }.
\]
\end{example}
\begin{example}[{Elliptic Delaunay surfaces in $\mathbb{L}^{3}$ and helicoids in ${\mathrm{Nil}}^{3}\left( \tau \right)$}]
We consider the Delaunay graph $z=g(x,y)=\rho\left( \sqrt{x^2 +y^2} \right)$ with constant mean curvature $\tau$ in ${\mathbb{L}}^{3}$. The radius function $\rho: [\lambda, \infty) \rightarrow \mathbb{R}$ satisfies
\[
2 \tau = \frac{1}{t} \frac{{\rho}'(t) }{\sqrt{ 1 - {{\rho}'(t)}^{2} } } + \frac{d}{dt} \left( \frac{{\rho}'(t) }{\sqrt{ 1 - {{\rho}'(t)}^{2} } } \right).
\]
It is immediate that its first integral is given by, for some $\mu \in \mathbb{R}$,
\[
\frac{{\rho}'(t) }{\sqrt{ 1 - {{\rho}'(t)}^{2} } } = \tau t - \frac{\mu}{t}.
\]
Its twin minimal graph is the helicoid $z=f(x,y)= \mu \arctan{\left( \frac{y}{x} \right)}$ in ${\mathrm{Nil}}^{3}\left( \tau \right)$.
\end{example}
\begin{example}[{Hyperbolic Delaunay surfaces in $\mathbb{L}^{3}$ and its twin graphs in ${\mathrm{Nil}}^{3}(\tau)$}] \label{daen}
We begin with the Delaunay graph $z=g(x,y)= \sqrt{x^{2} +{\rho(y)}^{2} }$ with constant mean curvature $\tau$ in ${\mathbb{L}}^{3}$. The radius function $\rho: [\lambda, \infty) \rightarrow \mathbb{R}$ satisfies
\[
2 \tau = \frac{ 1 }{{\rho}(t) \sqrt{ 1 - {{\rho}'(t)}^{2} } } + \frac{d}{dt} \left( \frac{{\rho}'(t) }{\sqrt{ 1 - {{\rho}'(t)}^{2} } } \right)
\]
or
\[
2 \tau {\rho}(t) {\rho}'(t) = \frac{d}{dt} \left( \frac{{\rho}(t) }{\sqrt{ 1 - {{\rho}'(t)}^{2} } } \right).
\]
Its first integral is given by the differential equation
\[
\tau {\rho (t)}^{2} - \frac{{\rho}(t) }{\sqrt{ 1 - {{\rho}'(t)}^{2} } } = \lambda
\]
for some $\lambda \in \mathbb{R}$. The twin minimal graph in ${\mathrm{Nil}}^{3}\left( \tau \right)$ is
\[
z=f(x,y)=\left( \frac{{\rho}'(y) }{\sqrt{ 1 - {{\rho}'(y)}^{2} } } -\tau y \right) x.
\]
If $\lambda = - \frac{4}{\tau}$, the corresponding Delaunay surface in ${\mathbb{L}}^{3}$ is called the \textit{Semitrough} \cite{CT90, HTTW95}, which is a complete constant mean curvature $\tau$ surface that invariant under hyperbolic rotations in $\mathbb{L}^{3}$. In this case, its twin graph admits an explicit conformal parametrization and also constructed by Daniel \cite{Dan06}. However, he used the Weierstrass formula of minimal surfaces in Riemannian Heisenberg space with the harmonic Gauss map of the Semitrough.
\end{example}
\begin{remark}
Lifting up the Gauss map of the Semitrough yields a minimal graph $z = - arcsinh \left( \frac{1-x^{2}-y^{2}}{2x} \right)$ in ${\mathbb{H}}^{2} \times \mathbb{R}$. Its level curves are foliated by the equidistant curves in the horizontal slices.
\end{remark}
\begin{example} [{Hyperbolic cylinders in $\mathbb{L}^{3}$ and translation invariant graphs in ${\mathrm{Nil}}^{3}={\mathrm{Nil}}^{3}\left(\frac{1}{2}\right)=\mathbb{E}^{3}\left(0, \frac{1}{2}\right)$}]
We begin with the hyperbolic cylinder $z^2 -y^2 =1$ in $\mathbb{L}^{3}$. It is invariant under the hyperbolic rotation about the
$x$-axis in $\mathbb{L}^{3}$. Applying the hyperbolic rotation about the $y$-axis in $\mathbb{L}^{3}$
\[
\left(x,y,z \right) \mapsto \left( \left(\cosh \theta \right) x + \left(\sinh \theta \right) z, \; y, \; \left(\sinh \theta \right) x
+ \left(\cosh \theta \right) z \right), \; \theta \in \mathbb{R}
\]
to our hyperbolic cylinder, we obtain the spacelike Delaunay surface in $\mathbb{L}^{3}$:
\[
\left[ (\cosh \theta) z - (\sinh \theta) x \right]^2 -y^2 =1.
\]
Let's take the upper part of this Delaunay surface:
\[
z=g^{\theta}(x,y) := \frac{\sqrt{ y^2 + 1} }{\cosh \theta} + \frac{ \sinh \theta }{ \cosh \theta } x.
\]
Its corresponding twin surface in ${\mathrm{Nil}}^{3}$ is given by the entire minimal graph
\[
z=f^{\theta}(x,y) := \frac{1}{2}xy - \frac{\sinh \theta}{2} \left[ {arcsinh} \; y + y\sqrt{1+y^2} \right],
\]
It is clear that it is invariant under the translations along the $x$-axis in ${\mathrm{Nil}}^{3}$:
\[
\left(x,y,z \right) \mapsto \left(x+a, y, z+\frac{1}{2} a y \right), \; a \in \mathbb{R}.
\]
The translation invariant graphs $z=f^{\theta}(x,y)$, $\theta \in \mathbb{R}$ are called saddle type surfaces
\cite{AR05, FMP99}. The graph $z=f^{\theta}(x,y)$ admits the conformal chart
\[
X^{\theta}(u,v) = \left( \left( \cosh \theta \right) u + \left( \sinh \theta \right) \cosh v, \, \sinh v, \, \frac{ \cosh \theta}{2} u \sinh v
- \frac{\sinh \theta }{2} v \right)
\]
with the conformal metric $ds^{2}= \cosh^{2} \theta \cosh^{2} v \left( du^2 +dv^2 \right)$.
\end{example}
\newpage
\section{Twin correspondence II} \label{SecTwin2}
% \subsection{{Twin correspondence II}} \label{SecTwin2}
% Recently, Ara\'{u}jo and Leite \cite{AL09} discovered interesting results on the Al\'{i}as-Palmer correspondence.
\subsection{Introduction}
We introduce the notion of area angle.
\begin{definition} Given a function $\phi=\left({\phi}_{1}, \cdots, {\phi}_{n}\right): \Omega \rightarrow {\mathbb{R}}^{n}$, $n \geq 2$, we write
\[
\Vert \mathcal{J} \Vert := \sqrt{ \sum_{1 \leq i<j \leq n} {{\mathcal{J}}_{i,j}}^{2}}, \quad {\mathcal{J}}_{i,j}:= \frac{ \partial \left( {\phi}_{i}, {\phi}_{j} \right)}{\partial (x, y)}.
\]
% Let $\phi=\left({\phi}_{1}, \cdots, {\phi}_{n}\right): \Omega \rightarrow {\mathbb{R}}^{n}$ be a function.
When the estimation $1 > \Vert \mathcal{J} \Vert$ holds on $\Omega$, we say that $\phi=\left({\phi}_{1}, \cdots, {\phi}_{n}\right)$ is area-decreasing. When $\phi$ is  area-decreasing, we call   ${\Theta}_{\phi}=\cos^{-1}{\left( \Vert \mathcal{J} \Vert \right)} \in \left(0, \frac{\pi}{2} \right]$ the area angle function of $\phi$ and say that the graph of $\phi$ admits positive area angle function.
\end{definition}
%\begin{remark}
%If we regard a function $\phi=\left({\phi}_{1}, {\phi}_{2}, {\phi}_{3}\right): \Omega \rightarrow {\mathbb{R}}^{3}$
%as a function $\left({\phi}_{1}, {\phi}_{2}, {\phi}_{3}, \cdots, {\phi}_{n+2}\right): \Omega \rightarrow {\mathbb{R}}^{n+2}$,
%$n \geq 2$, satisfying that ${\phi}_{k}=0$ for all integers $k \in \{4, \cdots, n+2\}$, then $\phi$ is area-decreasing. Its
%area angle is $\frac{\pi}{2}$.
%\end{remark}
We extend Calabi's correspondence to higher codimension as follows.
\begin{theorem}[Twin correspondence II \cite{Lee11a}] \label{twintwo}
We have the twin correspondence (up to vertical translations) between two dimensional minimal graphs having (not necessarily constant) \textit{positive area angle functions} in Euclidean space ${\mathbb{R}}^{n+2}$ endowed with the metric ${dx_{1}}^{2} +\cdots + {dx_{n+2}}^{2}$ and two dimensional maximal graphs having \textit{the same positive area angle functions} in pseudo-Euclidean space ${\mathbb{R}}^{n+2}_{n}$ equipped with the metric ${dx_{1}}^{2} + {dx_{2}}^{2} - {dx_{3}}^{2} -\cdots - {dx_{n+2}}^{2}$.
\end{theorem}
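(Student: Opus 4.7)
The strategy mirrors the proof of Theorem~\ref{DT2}. First I would recast the minimal graph equation for $\phi=(\phi_{1},\ldots,\phi_{n}):\Omega\to\mathbb{R}^{n}$ as a system of $n$ divergence-free equations, apply Poincar\'{e}'s Lemma component-wise to produce potentials $\psi_{1},\ldots,\psi_{n}$, and then verify that $\psi=(\psi_{1},\ldots,\psi_{n})$ parametrizes the twin maximal graph in $\mathbb{R}^{n+2}_{n}$. Writing $a=|\phi_{x}|^{2}$, $b=\langle\phi_{x},\phi_{y}\rangle$, $c=|\phi_{y}|^{2}$, the Lagrange identity gives $ac-b^{2}=\Vert\mathcal{J}\Vert^{2}$, so the Riemannian area element is
\[
W:=\sqrt{(1+a)(1+c)-b^{2}}=\sqrt{1+a+c+\Vert\mathcal{J}\Vert^{2}}.
\]
The vanishing of the mean curvature vector of the graph of $\phi$ in $\mathbb{R}^{n+2}$ then takes the form
\[
\partial_{x}\!\left(\frac{A_{k}}{W}\right)+\partial_{y}\!\left(\frac{B_{k}}{W}\right)=0, \quad k=1,\ldots,n,
\]
where $A_{k}=(1+c)\phi_{k,x}-b\phi_{k,y}$ and $B_{k}=-b\phi_{k,x}+(1+a)\phi_{k,y}$ are read off the adjugate of the induced metric. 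On the simply connected $\Omega$, Poincar\'{e}'s Lemma provides $\psi_{k}$, unique up to additive constants, satisfying the twin relation $(\psi_{k,x},\psi_{k,y})=(-B_{k}/W,\,A_{k}/W)$.

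The next step verifies that $\psi$ defines a spacelike graph in $\mathbb{R}^{n+2}_{n}$ with the same area angle as $\phi$. Direct expansion combined with $W^{2}=(1+a)(1+c)-b^{2}$ yields the clean algebraic identity
\[
A_{i}B_{j}-A_{j}B_{i}=\mathcal{J}_{i,j}\,W^{2},
\]
which immediately gives the preservation of Jacobians $\tilde{\mathcal{J}}_{i,j}:=\psi_{i,x}\psi_{j,y}-\psi_{i,y}\psi_{j,x}=\mathcal{J}_{i,j}$ and hence $\Vert\tilde{\mathcal{J}}\Vert=\Vert\mathcal{J}\Vert$. A slightly longer calculation of $\sum_{k}(A_{k}^{2}+B_{k}^{2})$, together with the Lagrange identity, reduces the Lorentzian area element
\[
\tilde{W}^{2}:=1-\sum_{k}|\nabla\psi_{k}|^{2}+\Vert\tilde{\mathcal{J}}\Vert^{2}
\]
to $\tilde{W}^{2}=(1-\Vert\mathcal{J}\Vert^{2})^{2}/W^{2}$. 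This is strictly positive under the area-decreasing hypothesis $\Vert\mathcal{J}\Vert<1$, so the graph of $\psi$ is spacelike in $\mathbb{R}^{n+2}_{n}$ with Lorentzian area element $\tilde{W}=(1-\Vert\mathcal{J}\Vert^{2})/W$ and preserved area angle function $\Theta_{\psi}=\Theta_{\phi}$.

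The final step is to verify the maximality of $\psi$. The Euler--Lagrange equation for the Lorentzian area functional again has divergence form $\partial_{x}(\partial_{\psi_{k,x}}\tilde{W})+\partial_{y}(\partial_{\psi_{k,y}}\tilde{W})=0$, so it suffices to establish the pointwise identifications
\[
\frac{\partial\tilde{W}}{\partial\psi_{k,x}}=\phi_{k,y}, \qquad \frac{\partial\tilde{W}}{\partial\psi_{k,y}}=-\phi_{k,x},
\]
after which the maximal system collapses to the trivial equation $\phi_{k,yx}-\phi_{k,xy}=0$. These identifications rest on the pair of identities
\[
B_{k}+\sum_{j=1}^{n}\mathcal{J}_{k,j}\,A_{j}=\bigl(1-\Vert\mathcal{J}\Vert^{2}\bigr)\phi_{k,y},\qquad A_{k}-\sum_{j=1}^{n}\mathcal{J}_{k,j}\,B_{j}=\bigl(1-\Vert\mathcal{J}\Vert^{2}\bigr)\phi_{k,x},
\]
which I expect to be the main technical obstacle. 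Unfolding the left-hand sums produces the second-order quantity $\sum_{j}\mathcal{J}_{k,j}\mathcal{J}_{j,\ell}$, whose contraction against $\phi_{\ell,x}$ or $\phi_{\ell,y}$ collapses once more into the Lagrange identity $ac-b^{2}=\Vert\mathcal{J}\Vert^{2}$, so careful bookkeeping of quadratic contractions of 2-minors is the technical crux. Since the twin relation is symmetric under swapping the Riemannian and Lorentzian sides, the correspondence is involutive up to vertical translations and it restricts to entire graphs whenever $\Omega=\mathbb{R}^{2}$.
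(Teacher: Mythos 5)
Your proposal is correct and follows essentially the same route as the paper: recast the minimal surface system in divergence form, apply Poincar\'{e}'s Lemma on the simply connected domain, and use Lagrange's identity to show the potential map has the same Jacobian $2$-minors, is spacelike with $\widehat{\omega}\omega=1-\Vert\mathcal{J}\Vert^{2}$, and satisfies the maximal surface system. The only divergence is in the last verification: the paper first proves the conformal equivalence of the two induced metrics ($E/\omega=\widehat{E}/\widehat{\omega}$, etc.\ -- the step the author flags as the crux) and then inverts the $2\times 2$ twin relation, whereas you establish the inverse relation directly via the minor-contraction identities $B_{k}+\sum_{j}\mathcal{J}_{k,j}A_{j}=(1-\Vert\mathcal{J}\Vert^{2})\phi_{k,y}$ and $A_{k}-\sum_{j}\mathcal{J}_{k,j}B_{j}=(1-\Vert\mathcal{J}\Vert^{2})\phi_{k,x}$, which I checked are correct and amount to the same Lagrange-identity bookkeeping, with the conformal equivalence of the metrics then falling out as a byproduct rather than serving as the pivot.
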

\begin{theorem}[Twin correspondence II - alternative version \cite{Lee11a}] \label{twintwo2}
\textbf{(a)} If the graph $\mathbf{\Phi}$ of an area-decreasing map $f=\left({f}_{1}, \cdots, {f}_{n}\right): \Omega \rightarrow {\mathbb{R}}^{n}$ with the area angle function ${\Theta} \in \left(0, \frac{\pi}{2} \right]$ becomes
a minimal surface in ${\mathbb{R}}^{n+2}$, then
there exists an area-decreasing map $g=\left({g}_{1}, \cdots, {g}_{n}\right): \Omega \rightarrow {\mathbb{R}}^{n}$ with
the same area angle function ${\Theta} \in \left(0, \frac{\pi}{2} \right]$ such that its graph $\mathbf{\widehat{\Phi}}$ becomes
a maximal surface in ${\mathbb{R}}^{n+2}_{n}$. \\
\textbf{(b)} Conversely, if the graph $\mathbf{\widehat{\Phi}}$ of an area-decreasing map $g=\left({g}_{1}, \cdots, {g}_{n}\right): \Omega \rightarrow {\mathbb{R}}^{n}$ with the area angle function ${\Theta} \in \left(0, \frac{\pi}{2} \right]$ is a maximal surface in ${\mathbb{R}}^{n+2}_{n}$, then we can associate an area-decreasing map $f=\left({f}_{1}, \cdots, {f}_{n}\right): \Omega \rightarrow {\mathbb{R}}^{n}$ with the same area angle function ${\Theta} \in \left(0, \frac{\pi}{2} \right]$ such that its graph $\mathbf{\Phi}$ is minimal surface in ${\mathbb{R}}^{n+2}$.\\
\textbf{(c)} The twin correspondences $\mathbf{\Phi} \Leftrightarrow \mathbf{\widehat{\Phi}}$ in (a) and (b) fulfill the following properties:\\
%\begin{enumerate}
\textbf{(c1)} The twin relations hold:
\[
\left( \frac{\partial g_{k}}{\partial x}, \frac{\partial g_{k}}{\partial y} \right)
= \left( - \frac{E}{\omega} \frac{\partial f_{k}}{\partial y} + \frac{F}{\omega} \frac{\partial f_{k}}{\partial x},
\frac{G}{\omega}\frac{\partial f_{k}}{\partial x} - \frac{F}{\omega} \frac{\partial g_{k}}{\partial y} \right), \quad k \in \{1, \cdots, n\}
\]
and equivalently
\[
\left( \frac{\partial f_{k}}{\partial x}, \frac{\partial f_{k}}{\partial y} \right)
= \left( \frac{\widehat{E}}{\widehat{\omega}} \frac{\partial g_{k}}{\partial y} - \frac{\widehat{F}}{\widehat{\omega}} \frac{\partial g_{k}}{\partial x},
- \frac{\widehat{G}}{\widehat{\omega}}\frac{\partial g_{k}}{\partial x} + \frac{\widehat{F}}{\widehat{\omega}} \frac{\partial g_{k}}{\partial y} \right), \quad k \in \{1, \cdots, n\}.
\]
Here, $ds_{\mathbf{\Phi}}^{2}=E dx^{2}+2F dx dy + Gdy^{2}$ denotes the induced metric and $\omega=\sqrt{EG-F^{2}}$. Likewise, we write $ds_{\mathbf{\widehat{\Phi}}}^{2}= \widehat{E}dx^{2}+2 \widehat{F}dx dy + \widehat{G}dy^{2}$ and $\widehat{\omega}=\sqrt{\widehat{E}\widehat{G}-{\widehat{F}}^{2}}$. \\
\textbf{(c2)} They share the area angle ${\Theta} \in \left(0, \frac{\pi}{2} \right]$. Each Jacobian determinant is preserved:
\[
\frac{ \partial \left( f_{i}, f_{j} \right)}{\partial (x, y)} = \frac{ \partial \left( g_{i}, g_{j} \right)}{\partial (x, y)}, \quad i, j \in \{1, \cdots, n\}.
\]
\textbf{(c3)} The angle duality holds
\[
\widehat{\omega} \omega = {\sin}^{2} \Theta >0.
\]
\textbf{(c4)} Two induced metrics are conformally equivalent. In fact,
$ds_{\mathbf{\Phi}}^{2}= \frac{\omega}{\widehat{\omega}}
ds_{\mathbf{\widehat{\Phi}}}^{2}$. \\
% $ds_{\mathbf{\Phi}}^{2}$ and $ds_{\mathbf{\widehat{\Phi}}}^{2}$
% and so ${\triangle}_{ ds_{\mathbf{\Phi}}^{2} } = \frac{\widehat{\omega}}{\omega}{\triangle}_{
% ds_{\mathbf{\widehat{\Phi}}}^{2} }$
%\end{enumerate}
\textbf{(d)} Since two integrability conditions in (c1) are equivalent, we see that
the twin correspondence is involutive. The twin minimal surface
$\widehat{\widehat{\Sigma}}$ of the twin surface $\widehat{\Sigma}$ of a minimal surface ${\Sigma}$
is congruent to ${\Sigma}$ up to vertical translations.
\end{theorem}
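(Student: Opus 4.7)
The plan is to parallel the proof of Twin Correspondence I (Theorem~\ref{DT2}): write the minimal surface system for $\mathbf{\Phi}$ in divergence form, apply Poincar\'{e}'s Lemma componentwise to produce the twin map $g$, and then verify the geometric identities in (c) together with the maximality of $\mathbf{\widehat{\Phi}}$.

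First I would derive the minimal surface system for the graph of $f=(f_{1},\ldots,f_{n}):\Omega\to\mathbb{R}^{n}$ in $\mathbb{R}^{n+2}$. A first-variation computation on the area functional $\int\omega\,dx\,dy$, with $\omega=\sqrt{EG-F^{2}}$, yields for each $k\in\{1,\ldots,n\}$ the divergence-zero equation
\[
\frac{\partial}{\partial x}\!\left(\frac{G(f_{k})_{x}-F(f_{k})_{y}}{\omega}\right)+\frac{\partial}{\partial y}\!\left(\frac{E(f_{k})_{y}-F(f_{k})_{x}}{\omega}\right)=0.
\]
Since $\Omega$ is simply connected, Poincar\'{e}'s Lemma delivers $g_{k}:\Omega\to\mathbb{R}$, unique up to an additive constant, satisfying
\[
(g_{k})_{x}=\frac{F(f_{k})_{x}-E(f_{k})_{y}}{\omega},\qquad(g_{k})_{y}=\frac{G(f_{k})_{x}-F(f_{k})_{y}}{\omega},
\]
which is exactly the first twin relation of (c1); setting $g=(g_{1},\ldots,g_{n})$ furnishes the candidate twin map.

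Next I would verify (c2)--(c4). A direct $2\times 2$ determinant expansion of $\partial(g_{i},g_{j})/\partial(x,y)$ collapses via $EG-F^{2}=\omega^{2}$ to
\[
\frac{\partial(g_{i},g_{j})}{\partial(x,y)}=\frac{\partial(f_{i},f_{j})}{\partial(x,y)},
\]
yielding (c2) and therefore $\|\mathcal{J}_{g}\|=\|\mathcal{J}_{f}\|<1$, so $g$ is area-decreasing with the same area angle $\Theta$. Substituting the twin relation into $\widehat{E}=1-|g_{x}|^{2}$, $\widehat{F}=-\langle g_{x},g_{y}\rangle$, $\widehat{G}=1-|g_{y}|^{2}$ and invoking the Lagrange identity for the Pl\"{u}cker coordinates of the tangent 2-plane,
\[
\omega^{2}=EG-F^{2}=E+G-1+\|\mathcal{J}\|^{2},
\]
should collapse everything to $(\widehat{E},\widehat{F},\widehat{G})=(\sin^{2}\Theta/\omega^{2})(E,F,G)$. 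Consequently $\widehat{\omega}^{2}=\sin^{4}\Theta/\omega^{2}>0$, so the graph is automatically spacelike, the angle duality $\widehat{\omega}\omega=\sin^{2}\Theta$ of (c3) holds, and $(\widehat{E},\widehat{F},\widehat{G})=(\widehat{\omega}/\omega)(E,F,G)$ gives the conformal equivalence $ds_{\mathbf{\Phi}}^{2}=(\omega/\widehat{\omega})\,ds_{\mathbf{\widehat{\Phi}}}^{2}$ of (c4).

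With (c2)--(c4) in place, the maximality of $\mathbf{\widehat{\Phi}}$ is almost automatic. Its first variation produces a divergence-form equation with the identical shape, and plugging the conformal scalings together with the twin relation into the fluxes collapses them, after cancellation of $EG-F^{2}=\omega^{2}$, to $-(f_{k})_{y}$ and $(f_{k})_{x}$ respectively, reducing the maximal equation to the trivial identity $(f_{k})_{xy}-(f_{k})_{yx}=0$. This establishes (a). Part (b) is the same argument run in reverse: Cramer's rule applied to the first twin relation of (c1), combined with the identities $\widehat{E}/\widehat{\omega}=E/\omega$, $\widehat{F}/\widehat{\omega}=F/\omega$, $\widehat{G}/\widehat{\omega}=G/\omega$ coming from (c4), shows that the two displays in (c1) are algebraically equivalent, so applying Poincar\'{e}'s Lemma to the divergence-form maximal equation for $g$ produces $f$ obeying the inverse twin relation, and the symmetric computation yields the minimal equation. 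Involution (d) then falls out because composing the two constructions restores $(f_{k})_{x},(f_{k})_{y}$ exactly, leaving only the additive Poincar\'{e} constants which correspond to vertical translations. The main obstacle is the algebra of the third paragraph: harnessing the Pl\"{u}cker--Lagrange identity to identify the conformal factor with $\sin^{2}\Theta$ is what secures the spacelike condition, the angle duality and the conformal equivalence all at once, with no extra hypothesis beyond $\|\mathcal{J}\|<1$.
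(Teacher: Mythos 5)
Your proposal is correct and follows essentially the same route as the paper: divergence form of the minimal surface system plus Poincar\'{e}'s Lemma to produce $g$, the Lagrange/Pl\"{u}cker identity $\omega^{2}=E+G-1+\Vert\mathcal{J}\Vert^{2}$ to secure the spacelike condition and the angle duality, the conformal proportionality $(\widehat{E},\widehat{F},\widehat{G})=(\widehat{\omega}/\omega)(E,F,G)$ to convert the twin relation into its inverse form, and the resulting reduction of the maximal system to $(f_{k})_{xy}=(f_{k})_{yx}$. The only cosmetic difference is that you establish the proportionality of the two metric tensors in one stroke, whereas the paper first derives $\widehat{E}\widehat{G}-\widehat{F}^{2}=(1-\Vert\mathcal{J}\Vert^{2})^{2}/\omega^{2}$ and then checks the three ratios separately; the content is identical.
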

\begin{remark}
Theorem \ref{twintwo2} extends Calabi's correspondence between minimal graphs in ${\mathbb{R}}^{3}$
and maximal graphs in ${\mathbb{L}}^{3}$. Here we explain the angle duality, appeared in (c3) in Theorem \ref{twintwo2}, in the
twin correspondence. First, in Calabi's correspondence, when $z=f(x,y)$ is a minimal graph in ${\mathbb{R}}^{3}$
and $z=g(x,y)$ is its twin maximal graph in ${\mathbb{L}}^{3}={\mathbb{R}}^{3}_{1}$, the angle duality reads
\[
\widehat{\omega} \omega = 1,
\]
where $\frac{1}{\omega}=\frac{1}{\sqrt{1+{f_{x}}^{2}+{f_{y}}^{2}}}=\left\langle N_{f}, {\partial}_{z} \right\rangle$
and $\frac{1}{\widehat{\omega}}=\frac{1}{\sqrt{1-{g_{x}}^{2}-{g_{y}}^{2}}}=\left\langle N_{g}, {\partial}_{z} \right\rangle$
are the induced angle functions of the minimal graph $z=f(x,y)$ and the maximal graph $z=g(x,y)$, respectively.
Here $N_{f}$ and $N_{g}$ denote their induced unit normal.
Theorem \ref{twintwo2} says that we have the twin correspondence between two dimensional minimal graphs having (not necessarily constant) positive area angle function ${\Theta} \in \left(0, \frac{\pi}{2} \right]$ in Euclidean space ${\mathbb{R}}^{n+2}$ and two dimensional maximal graphs having the same positive area angle function ${\Theta} \in \left(0, \frac{\pi}{2} \right]$ in pseudo-Euclidean space ${\mathbb{R}}^{n+2}_{n}$. In this case, the angle duality reads
\[
\widehat{\omega} \omega = {\sin}^{2} \Theta \leq 1,
\]
where ${\omega}$ and $\widehat{\omega}$ are defined in (c1) in Theorem \ref{twintwo2}. We notice that the right hand side
${\sin}^{2} \Theta$ in the general angle duality is not necessarily the constant $1$.
\end{remark}
\begin{remark}
As proved in \cite{Lee11a}, the twin correspondence in Theorem \ref{twintwo2} induces a natural duality for special Lagrangian graphs in ${\mathbb{R}}^{4}$ and ${\mathbb{R}}^{4}_{2}$.
\end{remark}
\subsection{{Minimal surface system and maximal surface system}} \label{MSSeu} % in ${\mathbb{R}}^{n+2}$
The research on the minimal surface system is initiated in \cite{LO77, Oss70}.
Let ${\mathbf{e}}_{1}=(1,0, \cdots, 0)$, $\cdots$, ${\mathbf{e}}_{n+2}=(0,0, \cdots, 1)$ be the
standard basis in Euclidean space ${\mathbb{R}}^{n+2}$ endowed with the metric ${dx_{1}}^{2} +\cdots + {dx_{n+2}}^{2}$. The minimal surface system reads \cite{Oss86}:
\begin{prop} \label{MSS1}
The graph $\mathbf{\Phi}(x,y)=x {\mathbf{e}}_{1}+y{\mathbf{e}}_{2}+ f_{1}(x,y) {\mathbf{e}}_{3} + \cdots + f_{n}(x,y) {\mathbf{e}}_{n+2}$ of the height function $f=\left({f}_{1}, \cdots, {f}_{n}\right): \Omega \rightarrow {\mathbb{R}}^{n}$ becomes a minimal surface in ${\mathbb{R}}^{n+2}$ if and only if
the minimal surface system holds:
\[
G \frac{\partial^{2} f_{k}}{\partial x^{2}} - 2 F \frac{\partial^{2} f_{k}}{\partial x \partial y} + E \frac{\partial^{2} f_{k}}{\partial y^{2}}=0, \quad k \in \{1, \cdots, n\}.
\]
Every ${\mathcal{C}}^{2}$ solution of the minimal
surface system is real analytic. Here, we set
\[
\begin{cases}
E=1 + \left(
\frac{\partial f_{1}}{\partial x} \right)^{2} +\cdots + \left(
\frac{\partial f_{n}}{\partial x} \right)^{2}, \\
F= \quad \quad \frac{\partial f_{1}}{\partial x} \frac{\partial f_{1}}{\partial y}
+ \cdots + \frac{\partial f_{n}}{\partial x} \frac{\partial
f_{n}}{\partial y}, \\
G= 1 + \left( \frac{\partial
f_{1}}{\partial y} \right)^{2} + \cdots + \left( \frac{\partial
f_{n}}{\partial y} \right)^{2}.
\end{cases}
\]
The patch $\mathbf{\Phi}$ induces the metric $ds_{\mathbf{\Phi}}^{2}=E dx^{2}+2F dx dy +Gdy^{2}$.
\end{prop}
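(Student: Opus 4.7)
The plan is to reduce the minimal graph condition to the vanishing of the mean curvature vector $\vec{H}$ and then to extract the system by exploiting that $\mathbf{\Phi}$ is affine in its first two components. First I would record the standard identity
$$2W\,\vec{H} \;=\; \bigl(G\,\mathbf{\Phi}_{xx} - 2F\,\mathbf{\Phi}_{xy} + E\,\mathbf{\Phi}_{yy}\bigr)^{\perp}, \qquad W := EG - F^{2},$$
where $(\cdot)^{\perp}$ denotes projection onto the normal bundle of the parametrized surface. This follows from the intrinsic computation $\Delta_{\mathbf{\Phi}} \mathbf{\Phi} = 2\vec{H}$, writing the Laplace--Beltrami operator in the form $\Delta_{\mathbf{\Phi}} = \frac{1}{\sqrt{W}} \partial_{i}\bigl(\sqrt{W}\,g^{ij}\partial_{j}\bigr)$ with $g^{ij}$ given by the induced metric $E,F,G$, and observing that the derivatives of $\sqrt{W}$ and of $g^{ij}$ assemble into a purely tangential contribution that is annihilated by $(\cdot)^{\perp}$.

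Because the first two components of $\mathbf{\Phi}(x,y)$ are linear in $(x,y)$, all second derivatives of $\mathbf{\Phi}$ lie in the span of $\mathbf{e}_{3},\ldots,\mathbf{e}_{n+2}$, and the combination on the right reduces to
$$G\,\mathbf{\Phi}_{xx} - 2F\,\mathbf{\Phi}_{xy} + E\,\mathbf{\Phi}_{yy} \;=\; \sum_{k=1}^{n} L_{k}(x,y)\,\mathbf{e}_{k+2}, \qquad L_{k} := G\,\partial_{xx}f_{k} - 2F\,\partial_{xy}f_{k} + E\,\partial_{yy}f_{k}.$$
In particular this vector has vanishing $\mathbf{e}_{1}$ and $\mathbf{e}_{2}$ components. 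On the other hand, $\mathbf{\Phi}_{x} = \mathbf{e}_{1} + \sum_{k} \partial_{x}f_{k}\,\mathbf{e}_{k+2}$ and $\mathbf{\Phi}_{y} = \mathbf{e}_{2} + \sum_{k} \partial_{y}f_{k}\,\mathbf{e}_{k+2}$, so any tangential vector $a\,\mathbf{\Phi}_{x} + b\,\mathbf{\Phi}_{y}$ with zero $\mathbf{e}_{1}$ and $\mathbf{e}_{2}$ entries forces $a = b = 0$ and hence is itself zero. Consequently the displayed combination is tangential iff it vanishes, which reads exactly $L_{k} = 0$ for every $k$; this is the minimal surface system. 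Thus $\vec{H}$ vanishes iff the system holds, proving the equivalence.

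For the analyticity statement, I would invoke classical elliptic regularity. The principal symbol of the system is $G\xi_{1}^{2} - 2F\xi_{1}\xi_{2} + E\xi_{2}^{2}$, whose discriminant is $4(F^{2} - EG) = -4W < 0$; since $E,F,G$ are polynomials in the first-order partials of $f_{1},\ldots,f_{n}$, the system is quasilinear elliptic with analytic structure, so Morrey's theorem \cite{Oss86} delivers real analyticity of every $\mathcal{C}^{2}$ solution. Equivalently, one may pass to local isothermal coordinates on the induced Riemannian surface, in which each component of the immersion becomes a harmonic function and hence real analytic.

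The main obstacle, conceptually, is recognizing how the affine form of the first two components of $\mathbf{\Phi}$ trivializes what would otherwise be a subtle tangential-versus-normal decomposition. Computationally the fussy step is establishing the identity $2W\,\vec{H} = \bigl(G\,\mathbf{\Phi}_{xx} - 2F\,\mathbf{\Phi}_{xy} + E\,\mathbf{\Phi}_{yy}\bigr)^{\perp}$ from $\Delta_{\mathbf{\Phi}}\mathbf{\Phi} = 2\vec{H}$, which requires careful organization of the derivatives of $\sqrt{W}$ against those of $g^{ij}$; once that bookkeeping is done the rest is linear algebra and classical regularity.
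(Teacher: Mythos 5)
Your argument is correct. Note that the paper itself offers no proof of Proposition \ref{MSS1}; it simply quotes the non-parametric minimal surface system from Osserman's survey \cite{Oss86}, so there is no in-paper argument to compare against. Your derivation is the standard one: the identity $2W\vec{H}=\bigl(G\,\mathbf{\Phi}_{xx}-2F\,\mathbf{\Phi}_{xy}+E\,\mathbf{\Phi}_{yy}\bigr)^{\perp}$ follows from $\Delta_{\mathbf{\Phi}}\mathbf{\Phi}=2\vec{H}$ because the first-order terms produced by differentiating $\sqrt{W}\,g^{ij}$ are combinations of $\mathbf{\Phi}_{x},\mathbf{\Phi}_{y}$ and hence tangential, and your key observation — that for a graph the vector $G\,\mathbf{\Phi}_{xx}-2F\,\mathbf{\Phi}_{xy}+E\,\mathbf{\Phi}_{yy}$ has no $\mathbf{e}_{1},\mathbf{e}_{2}$ components, while the only tangential vector with that property is zero — is exactly what converts the normal-projection condition into the pointwise system $L_{k}=0$. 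The ellipticity and analyticity remarks are also sound, since $W=EG-F^{2}\geq 1$ by Lagrange's identity (as the paper itself uses later in the proof of Theorem \ref{twintwo2}), so the system is uniformly elliptic and Morrey's regularity theorem applies.
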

\begin{definition} Let $\omega=\sqrt{EG-F^{2}}>0$ denote the area element. We call
$\frac{1}{\omega}$ the angle function of the minimal graph $\mathbf{\Phi}$.
\end{definition}
\begin{prop} \label{MSS2} Let $\left({\alpha}_{k}, {\beta}_{k} \right)=\left(\frac{\partial f_{k}}{\partial x}, \frac{\partial f_{k}}{\partial y} \right)$, $k \in \{1, \cdots, n\}$. \\
\textbf{(a)} We can re-write the minimal surface system in zero divergence form:
\[
\frac{\partial }{\partial x} \left( \frac{G}{\omega} {\alpha}_{k} - \frac{F}{\omega} {\beta}_{k} \right)+
\frac{\partial }{\partial y} \left( \frac{E}{\omega} {\beta}_{k} - \frac{F}{\omega} {\alpha}_{k} \right)=0, \quad k \in \{1, \cdots, n\}.
\]
\textbf{(b)} We also have
\[
\frac{\partial }{\partial x} \left( \frac{G}{\omega} \right) = \frac{\partial }{\partial y} \left( \frac{F}{\omega} \right) \quad \text{and} \quad \frac{\partial }{\partial x} \left( \frac{F}{\omega} \right) = \frac{\partial }{\partial y} \left( \frac{E}{\omega} \right).
\]
\end{prop}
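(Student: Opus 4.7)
The plan is to prove (b) first and then deduce (a) by a straightforward product-rule manipulation.

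For (b), the cleanest approach is to recognize the geometric content of the two asserted identities: they are precisely the statements that the coordinate functions $x$ and $y$, viewed as functions on the parametrized surface $\mathbf{\Phi}$, are harmonic with respect to the induced metric $ds_{\mathbf{\Phi}}^{2}=E\,dx^{2}+2F\,dx\,dy+G\,dy^{2}$. Recall that the Laplace--Beltrami operator of this metric acts on a function $u$ as
\[
\Delta u \;=\; \frac{1}{\omega}\!\left[\partial_{x}\!\left(\frac{G u_{x}-F u_{y}}{\omega}\right)+\partial_{y}\!\left(\frac{E u_{y}-F u_{x}}{\omega}\right)\right].
\]
Substituting $u=x$ yields $\omega\,\Delta x=\partial_{x}(G/\omega)-\partial_{y}(F/\omega)$, and substituting $u=y$ yields $\omega\,\Delta y=\partial_{y}(E/\omega)-\partial_{x}(F/\omega)$. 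Since $\mathbf{\Phi}$ is a minimal immersion, its mean curvature vector $\vec{H}=\tfrac{1}{2}\Delta_{\mathbf{\Phi}}\mathbf{\Phi}$ vanishes, so each ambient coordinate function of $\mathbf{\Phi}$ is harmonic; in particular $x$ and $y$ are. The two identities of (b) follow.

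With (b) in hand, the proof of (a) reduces to bookkeeping. Expand the divergence by the product rule, using the integrability relation $\partial_{y}\alpha_{k}=\partial_{x}\beta_{k}=f_{k,xy}$, to obtain
\[
\frac{G f_{k,xx}-2F f_{k,xy}+E f_{k,yy}}{\omega}\;+\;\alpha_{k}\!\left[\partial_{x}\!\left(\tfrac{G}{\omega}\right)-\partial_{y}\!\left(\tfrac{F}{\omega}\right)\right]\;+\;\beta_{k}\!\left[\partial_{y}\!\left(\tfrac{E}{\omega}\right)-\partial_{x}\!\left(\tfrac{F}{\omega}\right)\right].
\]
The first summand vanishes by the minimal surface system of Proposition~\ref{MSS1}, while the two bracketed quantities vanish by (b). This proves (a).

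The main obstacle, should one insist on a purely elementary verification of (b) without invoking the intrinsic Laplacian, is the resulting algebraic computation, which is nontrivial to organize. A practical route is to write
\[
\omega\!\left[\partial_{x}(G/\omega)-\partial_{y}(F/\omega)\right]\;=\;(\partial_{x}G-\partial_{y}F)\;-\;\frac{G\partial_{x}\omega-F\partial_{y}\omega}{\omega},
\]
expand $\omega\,\partial_{x}\omega$ and $\omega\,\partial_{y}\omega$ via $2\omega\,\partial_{x}\omega=(\partial_{x}E)G+E(\partial_{x}G)-2F(\partial_{x}F)$ and its $y$-analogue, substitute the explicit formulas for $\partial_{x}E,\partial_{y}E,\ldots$ in terms of the $f_{k,ij}$, and reorganize the resulting polynomial as a linear combination $\sum_{k}\alpha_{k}\bigl(Gf_{k,xx}-2Ff_{k,xy}+Ef_{k,yy}\bigr)/\omega^{2}$, which vanishes by Proposition~\ref{MSS1}. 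The second identity in (b) is handled symmetrically with $\beta_{k}$ in place of $\alpha_{k}$.
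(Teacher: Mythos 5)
Your proposal is correct. Note that the paper itself states Proposition \ref{MSS2} without proof, deferring to Osserman's survey, so there is no in-paper argument to compare against; your reasoning supplies the standard justification. The derivation of (b) from the harmonicity of the ambient coordinate functions is sound: for the induced metric $E\,dx^{2}+2F\,dx\,dy+G\,dy^{2}$ one has $\sqrt{g}=\omega$ and $\sqrt{g}\,g^{ij}=\frac{1}{\omega}\bigl(\begin{smallmatrix} G & -F \\ -F & E\end{smallmatrix}\bigr)$, so your formula for the Laplace--Beltrami operator is right, and since Proposition \ref{MSS1} identifies solutions of the system with geometrically minimal graphs, the identity $\Delta_{\mathbf{\Phi}}\mathbf{\Phi}=2\vec{H}=0$ applies to the first two ambient coordinates $x$ and $y$ and yields exactly the two identities of (b). (This is essentially Osserman's original route; the alternative is the brute-force expansion you sketch at the end, which also works but is much less illuminating.) Your deduction of (a) is likewise correct: the product-rule expansion splits the divergence into $\frac{1}{\omega}\left(Gf_{k,xx}-2Ff_{k,xy}+Ef_{k,yy}\right)$, which vanishes by Proposition \ref{MSS1}, plus $\alpha_{k}\left[\partial_{x}(G/\omega)-\partial_{y}(F/\omega)\right]+\beta_{k}\left[\partial_{y}(E/\omega)-\partial_{x}(F/\omega)\right]$, which vanishes by (b). One small presentational point: since (a) is stated as a rewriting (an equivalence) of the minimal surface system, it is worth remarking that your computation in fact gives both directions at once, because the divergence expression equals the non-divergence expression plus terms that vanish identically by (b).
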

%\begin{remark}
%To deduce above equalities, one can use the classical fact that the coordinate functions are harmonic on the minimal surfaces.
%\end{remark}
%
% \section{{Maximal surface system in ${\mathbb{R}}^{n+2}_{n}$}} \label{MSSpseu}
%
We now consider ${\mathbf{e}}_{1}=(1,0, \cdots, 0)$, $\cdots$, ${\mathbf{e}}_{n+2}=(0,0, \cdots, 1)$ as the
standard basis in pseudo-Euclidean space ${\mathbb{R}}^{n+2}_{n}$ equipped with the metric ${dx_{1}}^{2} + {dx_{2}}^{2} - {dx_{3}}^{2} -\cdots - {dx_{n+2}}^{2}$. The proofs of the below results are similar to the minimal surface system.
\begin{prop} \label{MSS3}
The spacelike graph $\mathbf{\widehat{\Phi}}(x,y)=x {\mathbf{e}}_{1}+y{\mathbf{e}}_{2}+ g_{1}(x,y) {\mathbf{e}}_{3} + \cdots + g_{n}(x,y) {\mathbf{e}}_{n+2}$ of the height function $g=\left({g}_{1}, \cdots, {g}_{n}\right): \Omega \rightarrow {\mathbb{R}}^{n}$ becomes a maximal surface in ${\mathbb{R}}^{n+2}_{n}$ if and only if $g$ satisfies the maximal surface
system:
\[
\widehat{G} \frac{\partial^{2} g_{k}}{\partial x^{2}} - 2 \widehat{F} \frac{\partial^{2} g_{k}}{\partial x \partial y} +
\widehat{E} \frac{\partial^{2} g_{k}}{\partial y^{2}}=0, \quad k \in \{1, \cdots, n\},
\]
or equivalently,
\[
\frac{\partial }{\partial x} \left( \frac{\widehat{G}}{\widehat{\omega}} {\widehat{\alpha}}_{k} - \frac{\widehat{F}}{\widehat{\omega}} {\widehat{\beta}}_{k} \right)+
\frac{\partial }{\partial y} \left( \frac{\widehat{E}}{\widehat{\omega}} {\widehat{\beta}}_{k} - \frac{\widehat{F}}{\widehat{\omega}} {\widehat{\alpha}}_{k} \right)=0, \quad \left({\widehat{\alpha}}_{k}, {\widehat{\beta}}_{k} \right)=\left(\frac{\partial g_{k}}{\partial x}, \frac{\partial g_{k}}{\partial y} \right).
\]
Here, we write
\[
\begin{cases}
\widehat{E}=1- \left( \frac{\partial g_{1}}{\partial x} \right)^{2} -\cdots - \left( \frac{\partial g_{n}}{\partial x} \right)^{2}, \\
\widehat{F}= \;\;\; - \frac{\partial g_{1}}{\partial x} \frac{\partial g_{1}}{\partial y} - \cdots - \frac{\partial g_{n}}{\partial x} \frac{\partial g_{n}}{\partial y}, \\
\widehat{G}= 1 - \left( \frac{\partial g_{1}}{\partial y} \right)^{2} - \cdots - \left( \frac{\partial g_{n}}{\partial y} \right)^{2}.
\end{cases}
\]
We assume the spacelike condition $\widehat{E}\widehat{G}-{\widehat{F}}^{2}>0$. The patch $\mathbf{\widehat{\Phi}}$ induces the Riemannian metric $ds_{\mathbf{\widehat{\Phi}}}^{2}= \widehat{E}dx^{2}+2 \widehat{F}dx dy + \widehat{G}dy^{2}$. We set $\widehat{\omega}=\sqrt{\widehat{E}\widehat{G}-{\widehat{F}}^{2}}$. We call $\frac{1}{\widehat{\omega}}$ the angle function of the maximal graph $\mathbf{\widehat{\Phi}}$ in ${\mathbb{R}}^{n+2}_{n}$.
Also, we obtain two identities
\[
\frac{\partial }{\partial x} \left( \frac{\widehat{G}}{\widehat{\omega}} \right) = \frac{\partial }{\partial y} \left( \frac{\widehat{F}}{\widehat{\omega}} \right)
\quad \text{and} \quad \frac{\partial }{\partial x} \left( \frac{\widehat{F}}{\widehat{\omega}} \right) = \frac{\partial }{\partial y} \left( \frac{\widehat{E}}{\widehat{\omega}} \right).
\]
\end{prop}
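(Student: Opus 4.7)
The plan is to mirror the derivation of Propositions \ref{MSS1} and \ref{MSS2}, carefully tracking the sign flips induced by the Lorentzian signature $(+,+,-,\ldots,-)$ of ${\mathbb{R}}^{n+2}_{n}$. Writing $\partial_x \mathbf{\widehat{\Phi}} = \mathbf{e}_1 + \sum_{k=1}^{n} (\partial g_k/\partial x)\,\mathbf{e}_{k+2}$ and similarly for $\partial_y \mathbf{\widehat{\Phi}}$, and pairing with the indefinite inner product on ${\mathbb{R}}^{n+2}_{n}$ where $\langle \mathbf{e}_{k+2}, \mathbf{e}_{k+2}\rangle = -1$, I would obtain exactly the stated expressions for $\widehat{E}$, $\widehat{F}$, $\widehat{G}$. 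The assumption $\widehat{E}\widehat{G} - \widehat{F}^{2} > 0$, combined with $\widehat{E}>0$ (which follows from the Cauchy--Schwarz inequality and spacelikeness), ensures that $ds_{\mathbf{\widehat{\Phi}}}^{2}$ is Riemannian, so $\mathbf{\widehat{\Phi}}$ is a bona fide spacelike immersion.

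Next, I would derive the second-order system by computing the mean curvature vector via the standard formula
\[
\vec{H} \;=\; \frac{1}{2\widehat{\omega}^{2}} \left(\widehat{G}\, \mathbf{\widehat{\Phi}}_{xx} - 2\widehat{F}\, \mathbf{\widehat{\Phi}}_{xy} + \widehat{E}\, \mathbf{\widehat{\Phi}}_{yy}\right)^{\perp}.
\]
Since $x$ and $y$ enter $\mathbf{\widehat{\Phi}}$ only linearly, the $\mathbf{e}_1, \mathbf{e}_2$ components of $\mathbf{\widehat{\Phi}}_{xx}, \mathbf{\widehat{\Phi}}_{xy}, \mathbf{\widehat{\Phi}}_{yy}$ vanish; the bracketed vector is then purely in the span of $\mathbf{e}_{3},\ldots,\mathbf{e}_{n+2}$. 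Vanishing of its normal component reduces, after a direct calculation identical in form to the Euclidean one, to $\widehat{G}\partial_x^{2} g_k - 2\widehat{F}\partial_{xy} g_k + \widehat{E}\partial_y^{2} g_k = 0$ for every $k$, which is the maximal surface system.

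To pass between the second-order form and the divergence form, I would expand the divergence $\partial_x(\widehat{G}\widehat{\alpha}_k/\widehat{\omega} - \widehat{F}\widehat{\beta}_k/\widehat{\omega}) + \partial_y(\widehat{E}\widehat{\beta}_k/\widehat{\omega} - \widehat{F}\widehat{\alpha}_k/\widehat{\omega})$ via the product rule. The second-derivative terms collect into $\frac{1}{\widehat{\omega}}(\widehat{G}\partial_x^{2} g_k - 2\widehat{F}\partial_{xy} g_k + \widehat{E}\partial_y^{2} g_k)$ using $\partial_x \widehat{\beta}_k = \partial_y \widehat{\alpha}_k$, while the remaining terms organize into $[\partial_x(\widehat{G}/\widehat{\omega}) - \partial_y(\widehat{F}/\widehat{\omega})]\widehat{\alpha}_k + [\partial_y(\widehat{E}/\widehat{\omega}) - \partial_x(\widehat{F}/\widehat{\omega})]\widehat{\beta}_k$. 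A direct algebraic check, parallel to the computation behind Proposition \ref{MSS2}, shows that each of these bracketed differences is itself a scalar multiple of the maximal surface system. Hence the two forms are equivalent, and the two stated compatibility identities hold automatically on any solution.

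The main obstacle is the bookkeeping of signs in this last algebraic identity. In the Lorentzian setting several intermediate cross terms flip relative to the Euclidean derivation, so one must verify that the structural identity $\partial_x(\widehat{G}/\widehat{\omega}) - \partial_y(\widehat{F}/\widehat{\omega}) = -\widehat{\omega}^{-3}\sum_k \widehat{\alpha}_k \bigl(\widehat{G}\partial_x^{2} g_k - 2\widehat{F}\partial_{xy} g_k + \widehat{E}\partial_y^{2} g_k\bigr)$, and its analogue with $(\widehat{E},\widehat{F})$ and $\widehat{\beta}_k$, hold with the correct global signs. Once this is confirmed, the proposition follows immediately.
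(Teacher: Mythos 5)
Your overall strategy --- redo the Euclidean derivation of Propositions \ref{MSS1} and \ref{MSS2} while tracking the sign changes forced by the signature $(+,+,-,\cdots,-)$ --- is exactly what the paper intends: the paper gives no proof of this proposition beyond the remark that the arguments are ``similar to the minimal surface system.'' Your computation of $\widehat{E},\widehat{F},\widehat{G}$, the mean curvature vector formula, and the reduction of $\vec{H}=0$ to the second-order system (the bracketed vector lies in the span of ${\mathbf{e}}_{3},\cdots,{\mathbf{e}}_{n+2}$ and can only be tangent if it is zero) are all sound.

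However, the ``structural identity'' you single out as the crux is wrong as stated, so the verification you defer would fail. Writing $M_{k}:=\widehat{G}\,\partial_{x}^{2}g_{k}-2\widehat{F}\,\partial_{x}\partial_{y}g_{k}+\widehat{E}\,\partial_{y}^{2}g_{k}$, the correct identity is
\[
\frac{\partial}{\partial x}\left(\frac{\widehat{G}}{\widehat{\omega}}\right)-\frac{\partial}{\partial y}\left(\frac{\widehat{F}}{\widehat{\omega}}\right)
=+\frac{1}{\widehat{\omega}^{3}}\sum_{k=1}^{n}\left(\widehat{G}\widehat{\alpha}_{k}-\widehat{F}\widehat{\beta}_{k}\right)M_{k},
\]
together with its analogue with coefficients $\widehat{E}\widehat{\beta}_{k}-\widehat{F}\widehat{\alpha}_{k}$. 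Your version has the wrong overall sign (already for $n=1$ a direct computation gives $+\widehat{\omega}^{-3}\widehat{\alpha}_{1}M_{1}$) and, for $n\geq 2$, the wrong coefficient ($\widehat{\alpha}_{k}$ rather than $\widehat{G}\widehat{\alpha}_{k}-\widehat{F}\widehat{\beta}_{k}$; the two agree only when $n=1$). The clean way to get the right formula, and to avoid the sign bookkeeping entirely, is to recognize the two identities as $\triangle_{\mathbf{\widehat{\Phi}}}x=0$ and $\triangle_{\mathbf{\widehat{\Phi}}}y=0$ and the divergence system as $\triangle_{\mathbf{\widehat{\Phi}}}g_{k}=0$, where $\triangle_{\mathbf{\widehat{\Phi}}}$ is the Laplace--Beltrami operator of the induced metric; since $\triangle_{\mathbf{\widehat{\Phi}}}\mathbf{\widehat{\Phi}}=2\vec{H}$ in the flat ambient space, maximality is equivalent to harmonicity of all $n+2$ coordinate functions, which yields the divergence system and the two identities simultaneously and also supplies the converse implication (divergence form $\Rightarrow$ second-order form), which your product-rule expansion delivers only after an additional linear-algebra step you do not address. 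A secondary caution: for $n\geq 2$ the condition $\widehat{E}\widehat{G}-\widehat{F}^{2}>0$ does not by itself force $\widehat{E}>0$ (take $n=2$ with $\partial g_{1}/\partial x=\partial g_{2}/\partial y=2$ and the other derivatives zero), so your Cauchy--Schwarz justification of positive definiteness does not work; one must assume spacelikeness of the tangent planes in the geometric sense.
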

\subsection{Proof of twin correspondence II} \label{SecTwin}
We notice that Theorem \ref{twintwo2} is an extended version of Theorem \ref{twintwo}.
\begin{proof}[Proof of Theorem \ref{twintwo2}] \label{twinproof}
We deduce (a) and (c). Working backwards gives (b). \\
Let $(x,y) \in \Omega \mapsto \mathbf{\Phi}(x,y)=x {\mathbf{e}}_{1}+y{\mathbf{e}}_{2}+ f_{1}(x,y) {\mathbf{e}}_{3} + \cdots + f_{n}(x,y) {\mathbf{e}}_{n+2}$ be a minimal surface patch in ${\mathbb{R}}^{n+2}$. Write $ds_{\mathbf{\Phi}}^{2}=E dx^{2}+2F dx dy + Gdy^{2}$ and $\omega=\sqrt{EG-F^{2}}>0$. We further assume the positive area angle condition
\[
1 > \cos \Theta = \Vert \mathcal{J} \Vert := \sqrt{ \sum_{1 \leq i<j \leq n} {{\mathcal{J}}_{i,j}}^{2}}, \quad {\mathcal{J}}_{i,j}:= \frac{ \partial \left( f_{i}, f_{j} \right)}{\partial (x, y)}.
\]
After setting $\left({\alpha}_{k}, {\beta}_{k} \right)=\left(\frac{\partial f_{k}}{\partial x}, \frac{\partial f_{k}}{\partial y} \right)$, $k \in \{1, \cdots, n\}$, we obtain
\[
E= 1 + \sum_{k=1}^{n} {{\alpha}_{k}}^{2}, \; F= \sum_{k=1}^{n} {\alpha}_{k} {\beta}_{k}, \; G= 1+ \sum_{k=1}^{n} {{\beta}_{k}}^{2}, \; {\mathcal{J}}_{i,j}= {\alpha}_{i} {\beta}_{j} - {\alpha}_{j} {\beta}_{i}.
\]
Then, Lagrange's identity gives
\[
{\omega}^{2}= EG-F^{2} = 1 + \sum_{k=1}^{n} {{\alpha}_{k}}^{2} + \sum_{k=1}^{n} {{\beta}_{k}}^{2} + \sum_{1 \leq i < j \leq n} {{\mathcal{J}}_{i,j}}^{2}
\]
and thus
\[
{\Vert \mathcal{J} \Vert}^{2} =\sum_{1 \leq i < j \leq n} {{\mathcal{J}}_{i,j}}^{2} = {\omega}^{2} - 1 - (E-1) -(G-1)= {\omega}^{2} + 1 - E -G.
\]
Since the minimal surface system reads
\[
\frac{\partial }{\partial x} \left( \frac{G}{\omega} {\alpha}_{k} - \frac{F}{\omega} {\beta}_{k} \right)+
\frac{\partial }{\partial y} \left( \frac{E}{\omega} {\beta}_{k} - \frac{F}{\omega} {\alpha}_{k} \right)=0, \quad k \in \{1, \cdots, n\},
\]
and since $\Omega$ is simply connected, Poincar\'{e}'s Lemma
guarantees the existence of functions $g_{1}$, $\cdots$, $g_{n}:
\Omega \rightarrow {\mathbb{R}}$ satisfying the following integrability condition
\[
\left( \frac{\partial g_{k}}{\partial x}, \frac{\partial g_{k}}{\partial y} \right)
= \left( - \frac{E}{\omega} {\beta}_{k} + \frac{F}{\omega} {\alpha}_{k},
\frac{G}{\omega} {\alpha}_{k} - \frac{F}{\omega} {\beta}_{k} \right), \quad k \in \{1, \cdots, n\}.
\]
Our aim is to show that $(x,y) \mapsto \mathbf{\widehat{\Phi}}(x,y)=x {\mathbf{e}}_{1}+y{\mathbf{e}}_{2}+ g_{1}(x,y) {\mathbf{e}}_{3} + \cdots + g_{n}(x,y) {\mathbf{e}}_{n+2}$ becomes a maximal surface patch in ${\mathbb{R}}^{n+2}_{n}$. Write $ds_{\mathbf{\widehat{\Phi}}}^{2}= \widehat{E}dx^{2}+2 \widehat{F}dx dy + \widehat{G}dy^{2}$. Then, we compute
\[
\widehat{E}= 1 - \sum_{k=1}^{n} {{\widehat{\alpha}}_{k}}^{2}, \; \widehat{F}= - \sum_{k=1}^{n} {\widehat{\alpha}}_{k} {\widehat{\beta}}_{k}, \; \widehat{G}= 1-\sum_{k=1}^{n} {{\widehat{\beta}}_{k}}^{2}, \quad \left({\widehat{\alpha}}_{k}, {\widehat{\beta}}_{k} \right)=\left(\frac{\partial g_{k}}{\partial x}, \frac{\partial g_{k}}{\partial y} \right).
\]
The first part of (c1) is now finished because the above condition reads
\[
\left( {\widehat{\alpha}}_{k} , {\widehat{\beta}}_{k} \right)
= \left( - \frac{E}{\omega} {\beta}_{k} + \frac{F}{\omega} {\alpha}_{k},
\frac{G}{\omega} {\alpha}_{k} - \frac{F}{\omega} {\beta}_{k} \right).
\]
\textbf{Claim A.} We show the spacelike condition $\widehat{E}\widehat{G}-{\widehat{F}}^{2}>0$. \\
\textbf{Step A1.} We compute Jacobian determinants. The above twin relation reads
\[
\begin{pmatrix} {\widehat{\beta}}_{k} \\ -{\widehat{\alpha}}_{k} \end{pmatrix}
= \begin{pmatrix}
\frac{G}{\omega} & -\frac{F}{\omega} \\
-\frac{F}{\omega} & \frac{E}{\omega}
\end{pmatrix}
\begin{pmatrix} {{\alpha}}_{k} \\ {{\beta}}_{k} \end{pmatrix}
\quad \text{or} \quad
\begin{pmatrix} {{\alpha}}_{k} \\ {{\beta}}_{k} \end{pmatrix} = \begin{pmatrix}
\frac{E}{\omega} & \frac{F}{\omega} \\
\frac{F}{\omega} & \frac{G}{\omega}
\end{pmatrix}
\begin{pmatrix} {\widehat{\beta}}_{k} \\ -{\widehat{\alpha}}_{k} \end{pmatrix}.
\]
Equating the determinant of the both sides in
\[
\begin{pmatrix} {{\alpha}}_{i} & {{\alpha}}_{j} \\ {{\beta}}_{i} & {{\beta}}_{j} \end{pmatrix} = \begin{pmatrix}
\frac{E}{\omega} & \frac{F}{\omega} \\
\frac{F}{\omega} & \frac{G}{\omega}
\end{pmatrix}
\begin{pmatrix} {\widehat{\beta}}_{i} & {\widehat{\beta}}_{j} \\ -{\widehat{\alpha}}_{i} & -{\widehat{\alpha}}_{j} \end{pmatrix}
\]
yields the equality ${\alpha}_{i} {\beta}_{j} - {\alpha}_{j} {\beta}_{i}= {\widehat{\alpha}}_{i} {\widehat{\beta}}_{j} - {\widehat{\alpha}}_{j} {\widehat{\beta}}_{i}$, which gives the assertion (c2):
\[
{\mathcal{J}}_{i,j} := \frac{ \partial \left( f_{i}, f_{j} \right)}{\partial (x, y)}={\alpha}_{i} {\beta}_{j} - {\alpha}_{j} {\beta}_{i}= {\widehat{\alpha}}_{i} {\widehat{\beta}}_{j} - {\widehat{\alpha}}_{j} {\widehat{\beta}}_{i} = \frac{ \partial \left( g_{i}, g_{j} \right)}{\partial (x, y)}, \quad i, j \in \{1, \cdots, n\}.
\]
\textbf{Step A2.} Using the twin relation and the definition ${\omega}^{2}= EG-F^{2}$, we obtain
\begin{eqnarray*}
&& \sum_{k=1}^{n} \left[ {{\widehat{\alpha}}_{k}}^{2} + {{\widehat{\beta}}_{k}}^{2} \right] \\
& = & \sum_{k=1}^{n} \left[ { \left( - \frac{E}{\omega} {\beta}_{k} + \frac{F}{\omega} {\alpha}_{k} \right) }^{2} + { \left( \frac{G}{\omega} {\alpha}_{k} - \frac{F}{\omega} {\beta}_{k} \right) }^{2} \right] \\
& = & \frac{1}{{\omega}^{2}} \left[
\left( F^2 +G^2 \right) \sum_{k=1}^{n} {{\alpha}_{k}}^{2} + \left( F^2 +E^2 \right) \sum_{k=1}^{n} {{\beta}_{k}}^{2}
- 2 (E+G)F \sum_{k=1}^{n} {\alpha}_{k} {\beta}_{k} \right] \\
& = & \frac{1}{{\omega}^{2}} \left[ \left( F^2 +G^2 \right) (E-1) + \left( F^2 +E^2 \right) (G-1)
- 2 (E+G) F^{2} \right] \\
& = &\frac{ (E+G+2) \left(EG-F^2\right) - (E+G)^{2} }{{\omega}^{2}}.
\end{eqnarray*}
\textbf{Step A3.} We here deduce the identity
$\widehat{E}\widehat{G}-{\widehat{F}}^{2} = \frac{\left( 1 - {\Vert \mathcal{J} \Vert}^{2} \right)^{2} }{{\omega}^{2}}$.
Then, the desired estimation $\widehat{E}\widehat{G}-{\widehat{F}}^{2}>0$ immediately follows from our assumption $1 > \Vert \mathcal{J} \Vert$. We recall that the Jacobian determinant equality ${\mathcal{J}}_{i,j} = {\widehat{\alpha}}_{i} {\widehat{\beta}}_{j} - {\widehat{\alpha}}_{j} {\widehat{\beta}}_{i}$ holds. Using Lagrange's identity again, we deduce
\begin{eqnarray*}
\widehat{E}\widehat{G}-{\widehat{F}}^{2} &=& \left( 1 - \sum_{k=1}^{n} {{\widehat{\alpha}}_{k}}^{2}
\right)\left(1-\sum_{k=1}^{n} {{\widehat{\beta}}_{k}}^{2} \right) - \left(- \sum_{k=1}^{n} {\widehat{\alpha}}_{k} {\widehat{\beta}}_{k} \right)^{2} \\
&=& 1- \sum_{k=1}^{n} \left[ {{\widehat{\alpha}}_{k}}^{2} + {{\widehat{\beta}}_{k}}^{2} \right] +
\left( \sum_{k=1}^{n} {{\widehat{\alpha}}_{k}}^{2}
\right)\left( \sum_{k=1}^{n} {{\widehat{\beta}}_{k}}^{2} \right) - \left( \sum_{k=1}^{n} {\widehat{\alpha}}_{k} {\widehat{\beta}}_{k} \right)^{2} \\
&=& 1- \sum_{k=1}^{n} \left[ {{\widehat{\alpha}}_{k}}^{2} + {{\widehat{\beta}}_{k}}^{2} \right] +
\sum_{1 \leq i < j \leq n} {{\mathcal{J}}_{i,j}}^{2} \\
&=& 1- \frac{ (E+G+2) {\omega}^{2} - (E+G)^{2} }{{\omega}^{2}} + \left( {\omega}^{2} + 1 - E -G \right) \\
&=& \frac{ \left( E+G - {\omega}^{2} \right)^{2} }{{\omega}^{2}} \\
&=& \frac{ \left( 1 - {\Vert \mathcal{J} \Vert}^{2} \right)^{2} }{{\omega}^{2}}.
\end{eqnarray*}
Since $1 > \cos \Theta = \Vert \mathcal{J} \Vert$, we deduce $\widehat{E}\widehat{G}-{\widehat{F}}^{2}>0$.
Write $\widehat{\omega}:=\sqrt{\widehat{E}\widehat{G}-{\widehat{F}}^{2}}>0$. Then, the above equalities give
the angle duality in (c3):
\[
\widehat{\omega}=\sqrt{\widehat{E}\widehat{G}-{\widehat{F}}^{2}}=\frac{E+G-{\omega}^{2}}{\omega}=\frac{ 1 - {\Vert \mathcal{J} \Vert}^{2} }{\omega}=\frac{ {\sin}^{2} \Theta }{\omega}>0.
\]
\textbf{Claim B.} We verify that $\mathbf{\widehat{\Phi}}(x,y)=x {\mathbf{e}}_{1}+y{\mathbf{e}}_{2}+ g_{1}(x,y) {\mathbf{e}}_{3} + \cdots + g_{n}(x,y) {\mathbf{e}}_{n+2}$ is a maximal graph in ${\mathbb{R}}^{n+2}_{n}$. \\
\textbf{Step B1.} We check (c4), which implies that the metric $ds_{\mathbf{\widehat{\Phi}}}^{2}= \frac{\widehat{\omega}}{\omega} ds_{\mathbf{\Phi}}^{2}$ is positive definite. We need to prove the three identities $\frac{E}{\omega}=\frac{\widehat{E}}{\widehat{\omega}}$,
$\frac{F}{\omega}=\frac{\widehat{F}}{\widehat{\omega}}$ and $\frac{G}{\omega}=\frac{\widehat{G}}{\widehat{\omega}}$.
The twin relation and the definition ${\omega}^{2}= EG-F^{2}$ yield
\begin{eqnarray*}
\sum_{k=1}^{n} {{\widehat{\alpha}}_{k}}^{2}
& = & \sum_{k=1}^{n} { \left( - \frac{E}{\omega} {\beta}_{k} + \frac{F}{\omega} {\alpha}_{k} \right) }^{2} \\
& = & \frac{E^{2}}{{\omega}^{2}} \sum_{k=1}^{n} {{\beta}_{k}}^{2}
+ \frac{F^{2}}{{\omega}^{2}} \sum_{k=1}^{n} {{\alpha}_{k}}^{2}
-2 \frac{EF}{{\omega}^{2}} \sum_{k=1}^{n} {\alpha}_{k} {\beta}_{k} \\
&=& \frac{E^{2} (G-1) + F^{2} (E-1) -2EF^{2}} {{\omega}^{2}} \\
&=& \frac{E {\omega}^{2} - E^2 -F^2 } {{\omega}^{2}}.
\end{eqnarray*}
We then use the angle duality
$\widehat{\omega}=\frac{ 1 - {\Vert \mathcal{J} \Vert}^{2} }{\omega}=\frac{E+G-{\omega}^{2}}{\omega}$
to deduce
\[
\widehat{E} - \frac{\widehat{\omega}}{\omega} {E} = 1- \sum_{k=1}^{n} {{\widehat{\alpha}}_{k}}^{2} -
\frac{\widehat{\omega}}{\omega} E = 1- \frac{E {\omega}^{2} - E^2 -F^2 } {{\omega}^{2}} -
\frac{E+G-{\omega}^{2}}{{\omega}^{2}} E=0.
\]
The remaining two identities in (c4) is proved similarly. Thus, (c4) is proved. \\
\textbf{Step B2.} We show that the height function $g$ satisfies the maximal surface system. We
use the three identities in Step B1 to rewrite the twin relation
as
\[
\begin{pmatrix} \frac{\partial f_{k}}{\partial x} \\ \frac{\partial f_{k}}{\partial y} \end{pmatrix}
=\begin{pmatrix} {{\alpha}}_{k} \\ {{\beta}}_{k} \end{pmatrix} = \begin{pmatrix}
\frac{E}{\omega} & \frac{F}{\omega} \\
\frac{F}{\omega} & \frac{G}{\omega}
\end{pmatrix}
\begin{pmatrix} {\widehat{\beta}}_{k} \\ -{\widehat{\alpha}}_{k} \end{pmatrix}
= \begin{pmatrix}
\frac{\widehat{E}}{\widehat{\omega}} & \frac{\widehat{F}}{\widehat{\omega}} \\
\frac{\widehat{F}}{\widehat{\omega}} & \frac{\widehat{G}}{\widehat{\omega}}
\end{pmatrix}
\begin{pmatrix} {\widehat{\beta}}_{k} \\ -{\widehat{\alpha}}_{k} \end{pmatrix}
= \begin{pmatrix}
\frac{\widehat{E}}{\widehat{\omega}} {\widehat{\beta}}_{k} - \frac{\widehat{F}}{\widehat{\omega}} {\widehat{\alpha}}_{k} \\
-\frac{\widehat{F}}{\widehat{\omega}} {\widehat{\alpha}}_{k} + \frac{\widehat{G}}{\widehat{\omega}} {\widehat{\beta}}_{k}
\end{pmatrix},
\]
which is the second part of (c1). It therefore follows that, for all $k \in \{1, \cdots, n\}$,
\[
\frac{\partial }{\partial x} \left( \frac{\widehat{G}}{\widehat{\omega}} {\widehat{\alpha}}_{k} - \frac{\widehat{F}}{\widehat{\omega}} {\widehat{\beta}}_{k} \right)+
\frac{\partial }{\partial y} \left( \frac{\widehat{E}}{\widehat{\omega}} {\widehat{\beta}}_{k} - \frac{\widehat{F}}{\widehat{\omega}} {\widehat{\alpha}}_{k} \right)= \frac{\partial }{\partial x} \left(-\frac{\partial f_{k}}{\partial y} \right) + \frac{\partial }{\partial y} \left( \frac{\partial f_{k}}{\partial x} \right)= 0,
\]
which is the maximal surface system. This completes the proof of (a).
\end{proof}

\begin{remark}The key point of the proof of Theorem \ref{twintwo2} is to show that the system of integrability conditions
\[
\left( \frac{\partial g_{k}}{\partial x}, \frac{\partial g_{k}}{\partial y} \right)
= \left( - \frac{E}{\omega} \frac{\partial f_{k}}{\partial y} + \frac{F}{\omega} \frac{\partial f_{k}}{\partial x},
\frac{G}{\omega}\frac{\partial f_{k}}{\partial x} - \frac{F}{\omega} \frac{\partial g_{k}}{\partial y} \right), \quad k \in \{1, \cdots, n\}
\]
is equivalent to the system of integrability conditions
\[
\left( \frac{\partial f_{k}}{\partial x}, \frac{\partial f_{k}}{\partial y} \right)
= \left( \frac{\widehat{E}}{\widehat{\omega}} \frac{\partial g_{k}}{\partial y} - \frac{\widehat{F}}{\widehat{\omega}} \frac{\partial g_{k}}{\partial x},
- \frac{\widehat{G}}{\widehat{\omega}}\frac{\partial g_{k}}{\partial x} + \frac{\widehat{F}}{\widehat{\omega}} \frac{\partial g_{k}}{\partial y} \right), \quad k \in \{1, \cdots, n\}.
\]
The proof of this fact is not immediate. In Theorem \ref{twintwo2}, we called these two \textit{equivalent} integrability conditions \textit{twin relations}. It is the reason why the proof of the higher codimension extension of Calabi's duality is not simple. The trick is to show that the induced metrics of twin surfaces
are conformally equivalent. This is Step B1 in the proof of Theorem \ref{twintwo2}.
\end{remark}

\subsection{Reading Weierstrass formulas of twin surfaces} \label{SecHOL}
We mentioned that the Calabi correspondence via Poincar\'{e}'s Lemma and the L\'{o}pez--L\'{o}pez--Souam correspondence via the Weierstrass data are the same \cite{Lee10a}. We extend this result for the twin correspondence between two dimensional minimal graphs having positive area angles in ${\mathbb{R}}^{n+2}$ and two dimensional maximal graphs having positive area angles in ${\mathbb{R}}^{n+2}_{n}$. The main idea is to construct simultaneous conformal coordinates for twin graphs.
% We keep the same notations in Section \ref{SecTwin}.
\begin{theorem}[\cite{Lee11a}] \label{SCC}
Let $\mathbf{\Phi}(x,y)$ be the minimal graph in ${\mathbb{R}}^{n+2}$ of an area-decreasing map $f=\left({f}_{1}, \cdots, {f}_{n}\right): \Omega \rightarrow {\mathbb{R}}^{n}$ with the (not necessarily constant) area angle function ${\Theta} \in \left(0, \frac{\pi}{2} \right]$ and $\mathbf{\widehat{\Phi}}(x,y)$ its twin maximal graph in ${\mathbb{R}}^{n+2}_{n}$ of an area-decreasing map $g=\left({g}_{1}, \cdots, {g}_{n}\right): \Omega \rightarrow {\mathbb{R}}^{n}$ with
the same area angle function ${\Theta} \in \left(0, \frac{\pi}{2} \right]$. \\
\textbf{(a)} There exists a simultaneous conformal coordinate $\xi={\xi}_{1}+i{\xi}_{2}$ for the minimal graph $\mathbf{\Phi}(x,y)$ in ${\mathbb{R}}^{n+2}$ and its twin maximal graph $\mathbf{\widehat{\Phi}}(x,y)$ in ${\mathbb{R}}^{n+2}_{n}$. \\
\textbf{(b)} Let $\Psi: (x,y) \mapsto \left( {\xi}_{1}, {\xi}_{2} \right)$ be the coordinate transformation in (a). The conformal harmonic immersion $\mathbf{\Phi} \circ {\Psi}^{-1}$ induces
the holomorphic null curve
\[
2\frac{\partial}{\partial {\xi}} \mathbf{\Phi} \circ {\Psi}^{-1}=\left({\phi}_{1}, {\phi}_{2}, {\phi}_{3}, \cdots, {\phi}_{n+2} \right).
\]
The conformal harmonic immersion $\mathbf{\widehat{\Phi}} \circ {\Psi}^{-1}$ induces the holomorphic
null curve
\[
2\frac{\partial}{\partial {\xi}} \mathbf{\widehat{\Phi}} \circ {\Psi}^{-1}= \left({\widehat{\phi}}_{1}, {\widehat{\phi}}_{2}, {\widehat{\phi}}_{3}, \cdots, {\widehat{\phi}}_{n+2} \right).
\]
We have the twin relation
${\phi}_{1}={\widehat{\phi}}_{1}$, ${\phi}_{2}={\widehat{\phi}}_{2}$, ${\phi}_{k+2}=-i{\widehat{\phi}}_{k+2}, \; k \in \{1, \cdots, n\}$.
\end{theorem}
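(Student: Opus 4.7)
The plan is to leverage the conformal equivalence of induced metrics provided by part (c4) of Theorem \ref{twintwo2}, namely $ds_{\mathbf{\Phi}}^{2} = (\omega/\widehat{\omega})\, ds_{\mathbf{\widehat{\Phi}}}^{2}$ with positive factor, to reduce the existence of a simultaneous conformal coordinate to the existence of an ordinary isothermal coordinate for the minimal graph $\mathbf{\Phi}$. Since solutions of the minimal surface system are real analytic (Proposition \ref{MSS1}), the metric $ds_{\mathbf{\Phi}}^{2}$ is real analytic on the simply connected domain $\Omega$, so classical existence of isothermal coordinates produces a local diffeomorphism $\Psi:(x,y)\mapsto(\xi_{1},\xi_{2})$ conformal for $ds_{\mathbf{\Phi}}^{2}$, and by the conformal relation the same chart is automatically isothermal for $ds_{\mathbf{\widehat{\Phi}}}^{2}$. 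This proves (a).

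For (b), set $\Psi^{-1}(\xi_{1},\xi_{2})=(p,q)$. The chain rule immediately yields $\phi_{1}=2p_{\xi}=\widehat{\phi}_{1}$ and $\phi_{2}=2q_{\xi}=\widehat{\phi}_{2}$ since the first two coordinates of both graphs are just $(p,q)$. The substantive step is to verify $\phi_{k+2}=-i\,\widehat{\phi}_{k+2}$, where $\phi_{k+2}=2(\alpha_{k}p_{\xi}+\beta_{k}q_{\xi})$ and $\widehat{\phi}_{k+2}=2(\widehat{\alpha}_{k}p_{\xi}+\widehat{\beta}_{k}q_{\xi})$ by the chain rule. I will expand the isothermal condition
\[
\Bigl\langle \tfrac{\partial}{\partial\xi}\bigl(\mathbf{\Phi}\circ\Psi^{-1}\bigr),\, \tfrac{\partial}{\partial\xi}\bigl(\mathbf{\Phi}\circ\Psi^{-1}\bigr) \Bigr\rangle=0
\]
and use $E=1+\sum\alpha_{k}^{2}$, $F=\sum\alpha_{k}\beta_{k}$, $G=1+\sum\beta_{k}^{2}$ to collapse it to the scalar quadratic $E p_{\xi}^{2}+2Fp_{\xi}q_{\xi}+Gq_{\xi}^{2}=0$. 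Choosing the orientation of $\xi$ so that $Ep_{\xi}+Fq_{\xi}=-i\omega q_{\xi}$, the same quadratic identity forces the companion $Fp_{\xi}+Gq_{\xi}=i\omega p_{\xi}$. Substituting the twin relation $\widehat{\alpha}_{k}=-\tfrac{E}{\omega}\beta_{k}+\tfrac{F}{\omega}\alpha_{k}$, $\widehat{\beta}_{k}=\tfrac{G}{\omega}\alpha_{k}-\tfrac{F}{\omega}\beta_{k}$ from (c1) and regrouping in $\alpha_{k},\beta_{k}$ reduces $\widehat{\alpha}_{k}p_{\xi}+\widehat{\beta}_{k}q_{\xi}$ to $\alpha_{k}(\tfrac{F}{\omega}p_{\xi}+\tfrac{G}{\omega}q_{\xi})-\beta_{k}(\tfrac{E}{\omega}p_{\xi}+\tfrac{F}{\omega}q_{\xi})=i(\alpha_{k}p_{\xi}+\beta_{k}q_{\xi})$, which is precisely $\widehat{\phi}_{k+2}=i\,\phi_{k+2}$.

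Holomorphicity and nullity of the $\phi_{j}$ in $\mathbb{R}^{n+2}$ and of the $\widehat{\phi}_{j}$ in $\mathbb{R}^{n+2}_{n}$ are standard consequences of conformal harmonicity of $\mathbf{\Phi}\circ\Psi^{-1}$ and $\mathbf{\widehat{\Phi}}\circ\Psi^{-1}$: for a conformal parametrization, the ambient inner square of $\tfrac{\partial}{\partial\xi}$ vanishes by definition, and harmonicity, which here is exactly the minimal (respectively maximal) surface system of Propositions \ref{MSS1} and \ref{MSS3}, gives $\tfrac{\partial^{2}}{\partial\xi\partial\bar{\xi}}=0$. Since $\Psi$ is simultaneously conformal by (a), both assertions apply with the same $\xi$. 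The main obstacle is consistent sign bookkeeping: the isothermal quadratic admits two complex-conjugate branches, and only one choice of orientation of $\xi$ produces $\widehat{\phi}_{k+2}=i\phi_{k+2}$ rather than its conjugate. Once this orientation is fixed, the remainder is algebraic bookkeeping driven entirely by the twin relations of Theorem \ref{twintwo2}.
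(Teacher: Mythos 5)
Your proof is correct, but for part (a) it takes a genuinely different route from the paper. The paper does not invoke the classical existence theorem for isothermal coordinates; in keeping with the survey's theme, it produces the chart explicitly by applying Poincar\'e's Lemma to the two curl identities of Proposition \ref{MSS2}(b), obtaining potentials $M,N$ with $(M_x,M_y)=(\tfrac{E}{\omega},\tfrac{F}{\omega})$ and $(N_x,N_y)=(\tfrac{F}{\omega},\tfrac{G}{\omega})$, and then setting $\Psi(x,y)=(x+M,y+N)$, whose Jacobian $J_\Psi=2+\tfrac{E+G}{\omega}>2$ gives local invertibility and whose pullback satisfies $ds^2_{\mathbf{\Phi}}=\tfrac{\omega}{J_\Psi}\left(d\xi_1^2+d\xi_2^2\right)$. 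You instead cite real analyticity of solutions of the minimal surface system and the Korn--Lichtenstein existence of isothermal coordinates for $ds^2_{\mathbf{\Phi}}$ alone; both arguments then use the conformal equivalence (c4) of Theorem \ref{twintwo2} to conclude that the same chart is isothermal for $ds^2_{\mathbf{\widehat{\Phi}}}$. Your version is shorter but non-constructive, while the paper's version yields an explicit coordinate change (the standard Osserman-type construction for the minimal surface system) and fits the Poincar\'e-Lemma theme of the article. For part (b) the content is essentially the same: the paper computes $\tfrac{\partial g_k}{\partial \xi_1}=-\tfrac{\partial f_k}{\partial \xi_2}$ directly from the explicit inverse Jacobian of its $\Psi$, whereas you extract $Ep_\xi+Fq_\xi=-i\omega q_\xi$ and $Fp_\xi+Gq_\xi=i\omega p_\xi$ from the isothermal quadratic $Ep_\xi^2+2Fp_\xi q_\xi+Gq_\xi^2=0$ and substitute the twin relation (c1); both land on $\widehat{\phi}_{k+2}=i\,\phi_{k+2}$, i.e.\ $\phi_{k+2}=-i\,\widehat{\phi}_{k+2}$. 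Your remark about fixing the orientation of $\xi$ is the right point to flag (the conjugate branch gives the opposite sign); one should add that exactly one factor of the quadratic vanishes at each point, since $(p_\xi,q_\xi)\neq(0,0)$ by invertibility of $\Psi$, so the chosen branch propagates consistently over the connected domain $\Omega$.
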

\begin{proof} We begin with the two identities in Proposition \ref{MSS2}:
\[
\frac{\partial }{\partial x} \left( \frac{F}{\omega} \right) = \frac{\partial }{\partial y} \left( \frac{E}{\omega} \right) \quad \text{and} \quad
\frac{\partial }{\partial x} \left( \frac{G}{\omega} \right) = \frac{\partial }{\partial y} \left( \frac{F}{\omega} \right).
\]
Here, $ds_{\mathbf{\Phi}}^{2}=E dx^{2}+2F dx dy + Gdy^{2}$ denotes the induced metric of the minimal graph $\mathbf{\Phi}(x,y)$ in ${\mathbb{R}}^{n+2}$. Since $\Omega$ is simply connected, Poincar\'{e}'s Lemma guarantees the existence of functions $N, M:\Omega \rightarrow \mathbb{R}$ satisfying the equalities
\[
M_{x} = \frac{E}{\omega}, \quad M_{y} = \frac{F}{\omega}, \quad N_{x} = \frac{F}{\omega}, \quad N_{y} = \frac{G}{\omega}.
\]
We then are able to introduce the coordinate transformation
\[
\Psi: (x,y) \mapsto \left( {\xi}_{1}, {\xi}_{2} \right):=\left(x+M(x,y),
y+N(x,y) \right)
\]
such that
\[
J_{\Psi}= \frac{ \partial \left( {\xi}_{1}, {\xi}_{2} \right)}{\partial (x, y)} = \det
\begin{pmatrix} 1+ \frac{E}{\omega} & \frac{F}{\omega} \\ \frac{F}{\omega} & 1+ \frac{G}{\omega}
\end{pmatrix} = 2+ \frac{E+G}{\omega}>2.
\]
Since $ J_{\Psi}=\frac{ \partial \left( {\xi}_{1}, {\xi}_{2} \right)}{\partial (x, y)}>0$, we have the existence of
the local inverse $\left( {\xi}_{1}, {\xi}_{2} \right) \mapsto (x,y)$.
Now, using the Chain Rule, we obtain the conformal metrics:
\[
ds_{\mathbf{\Phi}}^{2}=\frac{\omega}{J_{\Psi}} \left( d{{\xi}_{1}}^{2} + d{{\xi}_{2}}^{2} \right) \quad
\text{and} \quad ds_{\mathbf{\widehat{\Phi}}}^{2}= \frac{\widehat{\omega}}{J_{\Psi}} \left( d{{\xi}_{1}}^{2} + d{{\xi}_{2}}^{2} \right) = \frac{\widehat{\omega}}{\omega} ds_{\mathbf{\Phi}}^{2}.
\]
Proof of (a) is finished. We prove (b). Consider two conformal immersions
\[
\begin{cases}
\mathbf{\Phi} \circ {\Psi}^{-1} =
x \left( {\xi}_{1}, {\xi}_{2} \right) {\mathbf{e}}_{1}+y \left( {\xi}_{1}, {\xi}_{2} \right){\mathbf{e}}_{2}+
\sum_{k=1}^{n} f_{k}(x \left( {\xi}_{1}, {\xi}_{2} \right),y \left( {\xi}_{1}, {\xi}_{2} \right)) {\mathbf{e}}_{k+2},\\
\mathbf{\widehat{\Phi}} \circ {\Psi}^{-1} =
x \left( {\xi}_{1}, {\xi}_{2} \right) {\mathbf{e}}_{1}+y \left( {\xi}_{1}, {\xi}_{2} \right){\mathbf{e}}_{2}+
\sum_{k=1}^{n} g_{k}(x \left( {\xi}_{1}, {\xi}_{2} \right),y \left( {\xi}_{1}, {\xi}_{2} \right)) {\mathbf{e}}_{k+2}.
\end{cases}
\]
By the definition of the induced holomorphic curves, obviously, we obtain ${\widehat{\phi}}_{1}={\phi}_{1}$ and ${\widehat{\phi}}_{2}={\phi}_{2}$. To deduce ${\phi}_{k+2}=-i{\widehat{\phi}}_{k+2}$, $k \in \{1, \cdots, n\}$, we show that
\[
\left( \frac{\partial f_{k}}{\partial {\xi}_{1}}, \frac{\partial f_{k}}{\partial {\xi}_{2}} \right)
= \left( \frac{\partial g_{k}}{\partial {\xi}_{2}}, - \frac{\partial g_{k}}{\partial {\xi}_{1}} \right).
\]
We only check the second components. The twin relation in Theorem \ref{twintwo2} reads
\[
\left( {\widehat{\alpha}}_{k} , {\widehat{\beta}}_{k} \right)
= \left( - \frac{E}{\omega} {\beta}_{k} + \frac{F}{\omega} {\alpha}_{k},
\frac{G}{\omega} {\alpha}_{k} - \frac{F}{\omega} {\beta}_{k} \right),
\]
where $\left({\alpha}_{k}, {\beta}_{k}, {\widehat{\alpha}}_{k}, {\widehat{\beta}}_{k} \right)=\left(\frac{\partial f_{k}}{\partial x}, \frac{\partial f_{k}}{\partial y}, \frac{\partial g_{k}}{\partial x}, \frac{\partial g_{k}}{\partial y} \right)$.
We now prepare the equality
\[
\begin{pmatrix} \frac{\partial x}{\partial {\xi}_{1}} & \frac{\partial x}{\partial {\xi}_{2}} \\
\frac{\partial y}{\partial {\xi}_{1}} & \frac{\partial y}{\partial {\xi}_{2}} \end{pmatrix} =
{ \begin{pmatrix} \frac{\partial {\xi}_{1}}{\partial x} & \frac{\partial {\xi}_{1}}{\partial y} \\
\frac{\partial {\xi}_{2}}{\partial x} & \frac{\partial {\xi}_{2}}{\partial y} \end{pmatrix} }^{-1} = \frac{1}{J_{\Psi}}
\begin{pmatrix} 1+ \frac{G}{\omega} & - \frac{F}{\omega} \\ - \frac{F}{\omega} & 1+ \frac{E}{\omega}
\end{pmatrix}.
\]
We then use this together with the Chain Rule to deduce
\begin{eqnarray*}
&& \frac{\partial g_{k}}{\partial {\xi}_{1}} \\
&=&\frac{\partial x}{\partial {\xi}_{1}} \frac{\partial g_{k}}{\partial x} + \frac{\partial y}{\partial {\xi}_{1}} \frac{\partial g_{k}}{\partial y} \\
&=&\frac{\partial x}{\partial {\xi}_{1}} \left[ - \frac{E}{\omega} {\beta}_{k} + \frac{F}{\omega} {\alpha}_{k} \right] + \frac{\partial y}{\partial {\xi}_{1}} \left[ \frac{G}{\omega} {\alpha}_{k} - \frac{F}{\omega} {\beta}_{k} \right] \\
&=& \left[ \frac{\partial x}{\partial {\xi}_{1}} \frac{F}{\omega} + \frac{\partial y}{\partial {\xi}_{1}}
\frac{G}{\omega} \right] {\alpha}_{k} - \left[ \frac{\partial x}{\partial {\xi}_{1}} \frac{E}{\omega} + \frac{\partial y}{\partial {\xi}_{1}} \frac{F}{\omega} \right] {\beta}_{k} \\
&=& \frac{1}{J_{\Psi}} \left[ \left( 1+ \frac{G}{\omega} \right) \frac{F}{\omega} +
\left( - \frac{F}{\omega} \right) \frac{G}{\omega} \right] {\alpha}_{k} -
\frac{1}{J_{\Psi}} \left[ \left( 1+ \frac{G}{\omega} \right) \frac{E}{\omega} + \left( - \frac{F}{\omega} \right) \frac{F}{\omega} \right] {\beta}_{k} \\
&=& \frac{1}{J_{\Psi}} \frac{F}{\omega} {\alpha}_{k} - \frac{1}{J_{\Psi}} \left( 1+ \frac{E}{\omega} \right) {\beta}_{k} \\
&=& - \frac{\partial x}{\partial {\xi}_{2}} \frac{\partial f_{k}}{\partial x} - \frac{\partial y}{\partial {\xi}_{2}} \frac{\partial f_{k}}{\partial y} \\
&=& - \frac{\partial f_{k}}{\partial {\xi}_{2}}.
\end{eqnarray*}
\end{proof}
\section{Closing comments}
Two twin correspondences we constructed admit natural generalizations:
\begin{remark}[Timelike surfaces in Lorentzian Bianchi--Cartan--Vranceanu space]
We met the twin correspondence between graphs of constant mean curvature $H$ in
$\mathbb{E}^{3}(\kappa, \tau)$ and spacelike graphs of constant mean curvature $\tau$ in $\mathbb{L}^{3}(\kappa, H)$.
Similarly, we can construct the twin correspondence between timelike graphs of constant mean curvature $H$ in $\mathbb{L}^{3}(\kappa, \tau)$ and timelike graphs of constant mean curvature $\tau$ in $\mathbb{L}^{3}(\kappa, H)$. It extends
the classical self-duality on the moduli space of timelike graphs with zero mean curvature in Lorentz space $\mathbb{L}^{3}$.
\end{remark}
\begin{remark}[Extremal surfaces in  pseudo-Euclidean spaces]
We have the twin correspondence between two dimensional minimal graphs having positive area angles in Euclidean space ${\mathbb{R}}^{n+2}$ and two dimensional maximal graphs having the same positive area angles in pseudo-Euclidean space ${\mathbb{R}}^{n+2}_{n}$ with signature $(+,+,-,\cdots,-)$. Theorem \ref{twintwo2} and Theorem \ref{SCC} can be
 modified to the twin correspondence for spacelike (or timelike) graphs of zero mean curvature in other pseudo-Euclidean spaces with different signatures.
\end{remark}
We propose to implement Shiffman's program on other geometric partial differential equations:
\begin{remark}[Hamiltonian stationary Lagrangian graphs in ${\mathbb{R}}^{4}$ \cite{Sch05, Sch06, SW03}] A smooth Lagrangian surface $\Sigma$ in Euclidean space ${\mathbb{R}}^{4}$ equipped with standard symplectic structure is \textit{hamiltonian stationary} when its Lagrangian angle function ${\theta}_{\Sigma}$ is harmonic on $\Sigma$. The potential function $f$ of hamiltonian stationary graph $(x,y,{f}_{x}, {f}_{y})$ satisfies the zero divergence equation of bi-harmonic type:
\[
0=\frac{\partial}{\partial x} \left( {\triangle}_{ds^{2}} f_{x} \right) +
\frac{\partial}{\partial y} \left( {\triangle}_{ds^{2}} f_{y} \right).
\]
Here $ds^{2}$ and ${\triangle}_{ds^{2}}$ denote the induced metric of $\Sigma$ and the associated Laplace-Betrami
operator, respectively. Poincar\'{e}'s Lemma yields the existence of the potential function
$g$ satisfying that
\[
\left(g_{x},g_{y}\right) = \left(-{\triangle}_{ds^{2}} f_{y}, {\triangle}_{ds^{2}} f_{x}\right).
\]
It is then natural to ask what is the \textit{geometry} of the potential function $g$?
\end{remark}
\begin{remark}[Hessian zero equation as zero divergence equation] Hartman and Nirenberg \cite{HN59}, Massey \cite{Mas62}, and Stoker \cite{Sto61} independently showed that any complete flat surface in Euclidean space ${\mathbb{R}}^{3}$ is a right cylinder over a complete planar curve \cite{Gal09}. The height function $f$ of the flat graph $z=f(x,y)$ in ${\mathbb{R}}^{3}$ satisfies zero Gaussian curvature equation
\[
0= \frac{f_{xx}f_{yy}-{f_{xy}}^{2}}{\left(1+{f_{x}}^{2}+{f_{y}}^{2}\right)^{2}}.
\]
Lam \cite{Bra11, Lam10} used a remarkable formula for the scalar curvature of $n$ dimensional graphs in Euclidean space ${\mathbb{R}}^{n+1}$ as the divergence of an $n$ dimensional vector field to present a simple proof
of Riemannian positive mass inequality for graphs over the base space ${\mathbb{R}}^{n}$. His beautiful formula says that two dimensional Hessian zero equation can be written as the zero divergence equation
\[
0=\frac{\partial}{\partial x} \left( \frac{f_{yy}f_{x} - f_{xy}f_{y}}{2(1+{f_{x}}^{2}+{f_{y}}^{2})} \right)+
\frac{\partial}{\partial y} \left( \frac{f_{xx}f_{y} - f_{xy}f_{x}}{2(1+{f_{x}}^{2}+{f_{y}}^{2})} \right).
\]
Here, we assume that $f$ is class of ${\mathcal{C}}^{3}$. Then, applying Poincar\'{e}'s Lemma to Hessian zero equation
yields the existence of the potential function $g$ such that
\[
\left(g_{x},g_{y}\right) = \left(- \frac{f_{xx}f_{y} - f_{xy}f_{x}}{2(1+{f_{x}}^{2}+{f_{y}}^{2})},
\frac{f_{yy}f_{x} - f_{xy}f_{y}}{2(1+{f_{x}}^{2}+{f_{y}}^{2})} \right).
\]
What is the \textit{geometry} of the potential function $g$?
\end{remark}
% \section{Concluding Remarks}
\newpage

\bigskip
\bigskip
\bigskip
\begin{flushleft}
{{\texttt{Department of Mathematical Sciences,}}} \\
{{\texttt{Seoul National University, Korea.}}}
\end{flushleft}
\bigskip
\begin{flushleft}
{{\texttt{ultrametric@gmail.com}} } % \quad \textbf{\texttt{idea0507@snu.ac.kr}}}
\end{flushleft}
\end{document}